\definecolor{purple}{RGB}{148,0,211}
\definecolor{green}{RGB}{50,205,50}
\newtheorem{theorem}{Theorem}[section]
\newtheorem{lemma}[theorem]{Lemma}
\newtheorem{definition}[theorem]{Definition}
\newcommand{\naturals}{\mathbb{N}}
\newcommand{\reals}{\mathbb{R}}
\newcommand{\TT}{{\cal T}}
\newcommand{\CC}{{\cal C}}
\newcommand{\FF}{{\cal F}}
\newcommand{\HH}{{\cal H}}
\newcommand{\DD}{{\cal D}}
\newcommand{\PP}{{\cal P}}
\newcommand{\SSS}{{\cal S}}
\newcommand{\KK}{{\cal K}}
\newcommand{\MM}{{\cal M}}
\newcommand{\WW}{{\cal W}}
\newcommand{\EE}{{\cal E}}
\newcommand{\prob}{\mathbb{P}}
\newcommand{\eps}{{\varepsilon}}
\tikzstyle{arch} = [out=30, in=150]
\title{Edge-coloring a graph $G$ so that every copy of a graph $H$ has an odd color class}
\author{Patrick Bennett \footnote{Department of Mathematics, Western Michigan University, Kalamazoo, MI. \texttt{patrick.bennett@wmich.edu}.}  
\and Emily Heath \footnote {Department of Mathematics, Iowa State University, Ames, IA. \texttt{eheath@iastate.edu}. Research partially supported by NSF RTG Grant DMS-1839918.}
\and Shira Zerbib \footnote {Department of of Mathematics, Iowa State University, Ames, IA. \texttt{zerbib@iastate.edu}. Research partially supported by NSF Grant DMS-1953929.}}
\date{\today}
\begin{document}

\maketitle

\begin{abstract}
Recently,  Alon \cite{alon} introduced the  notion of an $H$-code for a graph $H$:
a collection of graphs  on vertex set  $[n]$ is  
an $H$-code 
if it contains no two members whose symmetric difference is isomorphic to
$H$. 
Let $D_{H}(n)$ denote the maximum possible cardinality of an $H$-code, and let
$d_{H}(n)=D_{H}(n)/2^{n \choose 2}$.
Alon observed that a  lower bound on $
d_{H}(n)$ can be obtained by attaining an upper bound on the number of colors needed to 
edge-color $K_n$ so that every copy of $H$ has an odd color class.

Motivated by this observation, we define  $g(G,H)$ to be the minimum number of colors needed to edge-color a graph $G$ so that every copy of  $H$ has an odd color class. We prove $g(K_n,K_5) \le n^{o(1)}$ and $g(K_{n,n}, C_4)= n/2+o(n)$. The first result shows $d_{K_5}(n) \ge \frac{1}{n^{o(1)}}$ and was obtained independently in \cite{GXZ}.   
\end{abstract}

\section{Introduction}

Given a graph $G$ and a subgraph $H$, let $g(G,H)$ be the minimum number of colors needed to edge-color $G$ so that every copy of $H$ sees some color an odd number of times. The problem of determining $g(G,H)$ is a natural question motivated by recent work of Alon~\cite{alon} introducing the related notion of {\em graph-codes}.

 Let $V=[n]$ and let
$\HH$ be a family of graphs on the set of vertices $[n]$ 
which is closed under
isomorphism.
A collection of graphs $\FF$ on $[n]$ is called 
an {\em $\HH$-code} if it contains no two members whose symmetric difference is a graph in $\HH$. For the special case that $\HH$ contains all copies 
of a single graph $H$ on $[n]$ this is called an $H$-code. 
Let $D_{\HH}(n)$ denote the maximum possible cardinality of an $\HH$-code, and let
\[
d_{\HH}(n)=\frac{D_{\HH}(n)}{2^{n \choose 2}}
\]
be the maximum possible fraction of the total number of graphs
on $[n]$ in an $\HH$-code.  If $\HH$ consists of all graphs
isomorphic to one graph $H$,  denote $d_{\HH}(n)$ by $d_{H}(n)$.

In the case where $\HH$ consists of all graphs with independence number at most 2, a result of Ellis, Filmus
and Friedgut~\cite{EFF} shows that $d_{\HH}(n)=1/8$ for $n\geq 3$. Berger and Zhao~\cite{BZ} proved the analogous result for the family $\HH$ of graphs with independence number at most 3, showing that $d_{\HH}(n)=1/64$ for all $n\geq 4$. In~\cite{AGKMS}, Alon, Gujgiczer, Körner, Milojević, and Simonyi studied $D_{\HH}(n)$ and $d_{\HH}(n)$ for several families $\HH$, such as disconnected graphs, graphs that are not 2-connected, non-Hamiltonian graphs, graphs that contain or do not contain a spanning star, graphs that contain an induced or non-induced copy of a fixed graph $T$, and graphs that do not contain such a subgraph. Alon~\cite{alon} also studied the cases for cliques, stars, and matchings.

In~\cite{alon}, Alon mentions that the case $\HH=\KK$, where $\KK$ is the family of all cliques, is of particular interest. This case is motivated by a conjecture of Gowers (see \cite{Go, alon}). Further, he comments that $d_{K_4}(n)\geq \frac{1}{n^{o(1)}}.$ This result follows from the existence of an edge-coloring of $K_n$ by $n^{o(1)}$ colors with no copy of $K_4$ in which every color appears an even number of times; that is, $g(K_n,K_4)\leq n^{o(1)}$. A coloring with this property was given in \cite{CH2}, modifying constructions in~\cite{CFLS, Mubayi1}. 
In our first result,  we show that the same coloring has the property that there exists no copy of $K_5$ in which every color appears an even number of times. We prove:
\begin{theorem}\label{main1}
   We have $g(K_n,K_5) \le n^{o(1)}$. 
\end{theorem}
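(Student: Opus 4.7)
The plan is to verify that the explicit coloring $\chi$ of $K_n$ with $n^{o(1)}$ colors constructed in \cite{CH2} (itself a modification of \cite{CFLS, Mubayi1}), which is already known to forbid a $K_4$ with all color classes of even size, \emph{also} has the same property for $K_5$. No new coloring is produced; instead, the $K_5$ property will be extracted from the finer structure of $\chi$ together with its $K_4$ guarantee.

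The first step is a purely combinatorial parity reduction. Fix a copy of $K_5$ on $\{v_1,\dots,v_5\}$ and assume, toward a contradiction, that every color appears an even number of times on its ten edges. For each $i\in[5]$ let $H_i$ denote the $K_4$ on $V(K_5)\setminus\{v_i\}$ and let $S_i$ denote the star of the four edges at $v_i$, so that $E(K_5)=E(H_i)\sqcup E(S_i)$. Applying the $K_4$ guarantee to $H_i$ produces a color $c_i$ appearing oddly in $H_i$; combined with the evenness assumption on $K_5$, the same color $c_i$ must then appear oddly (hence $1$ or $3$ times) on the star $S_i$. So we obtain five vertex/color pairs $(v_i,c_i)$ such that $c_i$ is odd on the four edges of $K_5$ incident to $v_i$.

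The second, and main, step is to show that this configuration is incompatible with the algebraic description of $\chi$. I would open the construction from \cite{CH2}, which colors each edge of $K_n$ by a hash-type combination of coordinates coming from a Behrend-like partition, and parallel the $K_4$ argument used there. A useful extra ingredient for $K_5$ is the double-counting identity: each edge of $K_5$ lies in exactly two of the stars $S_i$, so the five odd-star indicators obtained above are linearly dependent modulo $2$. Combined with the coordinate-wise structural identities satisfied by $\chi$ on small cliques, this dependence should force an impossible algebraic relation among the colors on the ten edges.

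The main obstacle will be precisely this parity/algebra bookkeeping. The $K_4$ case is comparatively clean because six edges admit only a small list of possible even-color patterns and the algebraic identities can be checked directly. A $K_5$ has ten edges, five $K_4$-subgraphs, and a much richer family of admissible even-color patterns, so the argument will need a case split on whether the colors $c_1,\dots,c_5$ are all distinct, partially coincident, or all equal; in each subcase one must verify that the implied algebraic identities contradict the ones forced by the evenness hypothesis on $K_5$. If this case analysis goes through as expected, the coloring from \cite{CH2} witnesses $g(K_n,K_5)\le n^{o(1)}$ with no modification.
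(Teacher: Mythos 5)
You have the right coloring (the Modified CFLS $(5,4)$-coloring of Cameron and Heath) and your first step is a legitimate parity observation: applying the known ``every $K_4$ has an odd color'' property to each $H_i=K_5\setminus\{v_i\}$ and subtracting from the all-even hypothesis on $K_5$ does yield, for each vertex $v_i$, a color $c_i$ that occurs an odd number of times on the star $S_i$. This is a different reduction from the one the paper uses. The paper instead classifies by the total number of colors on the $K_5$: with $\geq 6$ colors some color appears once; with exactly $4$ colors the coloring must be one of the three ``leftover'' configurations (classified in an earlier Cameron--Heath paper), each of which has a color of multiplicity one; so an all-even $K_5$ must use exactly $5$ colors with each appearing exactly twice (a ``2-2-2-2-2'' coloring). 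Your reduction produces weaker, less structured information than this; notice in particular that the linear dependence $\sum_i \chi_{S_i}\equiv 0\pmod 2$ you invoke places no constraint on the pairs $(v_i,c_i)$ unless the $c_i$'s are controlled, and you do not explain how to control them.

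The genuine gap is that your ``second, and main, step'' -- which is the entire substance of the theorem -- is not carried out, and is not correctly anticipated either. The paper's proof does not derive a global $\mathbb{F}_2$-linear contradiction; it proceeds by establishing a list of small \emph{forbidden configurations} (certain color patterns on $4$ and $5$ vertices, each ruled out by a direct computation with the block/first-difference-index coordinates of the Modified CFLS colors), and then by an exhaustive case split on the $2$-$2$-$2$-$2$-$2$ coloring according to how many of the five doubleton color classes are matchings versus paths (six cases, from five paths to five matchings). Each case is shown to contain one of the forbidden configurations. None of this -- the reduction to 2-2-2-2-2 colorings, the identification of the forbidden patterns, the coordinate-level verification that they cannot occur under $\varphi$, or the matching/path case analysis -- appears in your proposal; you explicitly concede this is ``the main obstacle'' and conclude only ``if this case analysis goes through as expected.'' As written, the proposal is a plan with the proof's core missing, and the specific mechanism you guess at (a parity/double-counting identity among the stars) is not the mechanism that makes the argument work.
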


This, together with a similar argument to that in \cite{alon}, implies the following.
\begin{theorem}\label{mainthm} We have
    $d_{K_5}(n)\geq \frac{1}{n^{o(1)}}.$ 
\end{theorem}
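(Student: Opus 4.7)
The plan is to deduce Theorem~\ref{mainthm} from Theorem~\ref{main1} by running the linear-algebraic ``parity code'' reduction used by Alon in \cite{alon}. Concretely, I would start by fixing the edge-coloring $\chi$ supplied by Theorem~\ref{main1}, with color classes $E_1,\dots,E_k\subseteq E(K_n)$ where $k=g(K_n,K_5)\le n^{o(1)}$, so that every copy of $K_5$ in $K_n$ contains at least one color class of odd size.

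Next, identifying each graph $F$ on $[n]$ with its edge-indicator vector in $\mathbb{F}_2^{\binom{n}{2}}$, I would define the $\mathbb{F}_2$-linear parity map
\[
\varphi:\mathbb{F}_2^{\binom{n}{2}}\to\mathbb{F}_2^{k},\qquad \varphi(F)_c=|E(F)\cap E_c|\bmod 2,
\]
and set $\FF:=\varphi^{-1}(\mathbf{0})$. Since $\varphi$ is linear with codomain of dimension $k$, its kernel has dimension at least $\binom{n}{2}-k$, so $|\FF|\ge 2^{\binom{n}{2}-k}$. The key step is to verify that $\FF$ is a $K_5$-code: for any $F_1,F_2\in\FF$, linearity gives $\varphi(F_1\triangle F_2)=\mathbf{0}$, so every color class of $\chi$ meets $F_1\triangle F_2$ in an even number of edges. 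If $F_1\triangle F_2$ were a copy of $K_5$ on five vertices of $[n]$, then this $K_5$ would be a subgraph of $K_n$ in which every $\chi$-color class has even size, contradicting the choice of $\chi$.

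Dividing by $2^{\binom{n}{2}}$ then yields
\[
d_{K_5}(n)\;\ge\;\frac{|\FF|}{2^{\binom{n}{2}}}\;\ge\;2^{-k}\;\ge\;\frac{1}{n^{o(1)}},
\]
which is exactly the claimed bound. There is essentially no new obstacle here: the reduction is pure linear algebra, and all the genuine work is in Theorem~\ref{main1}. The only point to check carefully is that a symmetric difference of two subgraphs of $K_n$ is itself a subgraph of $K_n$, so the parity hypothesis on $\chi$ does apply to any $K_5$ arising as such a symmetric difference.
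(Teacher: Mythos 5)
There is a genuine gap in your final step: from $|\FF|\ge 2^{\binom{n}{2}-k}$ you conclude $d_{K_5}(n)\ge 2^{-k}\ge \frac{1}{n^{o(1)}}$, but $k\le n^{o(1)}$ does not imply $2^{-k}\ge n^{-o(1)}$. The coloring behind Theorem~\ref{main1} uses $k=2^{\Theta(\sqrt{\log n}\log\log n)}$ colors, which exceeds every fixed power of $\log n$, so $2^{-k}=n^{-k/\log_2 n}$ with $k/\log_2 n\to\infty$; this is superpolynomially small, far weaker than the claimed bound. Moreover, no argument based solely on the kernel of the parity map can succeed here, since $g(K_n,K_5)=\Omega(\log n)$ (as noted after Theorem~\ref{mainthm}), so $2^{-k}$ is at best $1/\mathrm{poly}(n)$, never $1/n^{o(1)}$.

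The missing ingredient, which is the actual content of the reduction in \cite{alon} that the paper invokes, is that a copy of $K_5$ has only $10$ edges and hence meets at most $10$ color classes: its image under your parity map is a \emph{nonzero} vector (by Theorem~\ref{main1}) of Hamming weight at most $10$. So instead of $\varphi^{-1}(\mathbf{0})$, take $\FF=\varphi^{-1}(C)$, where $C\subseteq\mathbb{F}_2^{k}$ is a linear code of minimum distance at least $11$ and codimension $O(\log k)$ (a BCH code, or a Gilbert--Varshamov/greedy construction). If $F_1,F_2\in\FF$ and $F_1\triangle F_2$ were a copy of $K_5$, then $\varphi(F_1\triangle F_2)=\varphi(F_1)+\varphi(F_2)$ would be a nonzero codeword of $C$ of weight at most $10$, contradicting the minimum distance; and since the color classes are nonempty and disjoint, $\varphi$ is surjective, giving $|\FF|=|C|\cdot 2^{\binom{n}{2}-k}\ge 2^{\binom{n}{2}}/\mathrm{poly}(k)$. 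This yields $d_{K_5}(n)\ge 1/\mathrm{poly}(k)=\frac{1}{n^{o(1)}}$ as required. Your linearity observation and the check that a symmetric difference of graphs on $[n]$ is again a graph on $[n]$ are fine; only this coding-theoretic strengthening of the quantitative step is missing.
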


Very recently, Ge, Xu, and Zhang~\cite{GXZ} also independently obtained this result using a similar method.  They further note that $g(K_n,K_5)\geq \Omega(\log n)$. 

As is observed in \cite{alon}, if every member of $\HH$ has an odd number of edges then $d_{\HH}(n) \geq \frac{1}{2}$, as the family of all graphs on $[n]$ with an even number of edges forms an $\HH$-code.
Thus, when considering cliques, the next interesting case is the case of $H=K_8$. We do not know if the same coloring, or some generalization of it, has the property that  there exists no copy of $K_8$ in which every color appears an even number of times.  As our method relies on case analysis, it will be hard to generalize it to the case of $K_8$.

An upper bound on $g(K_n,K_p)$ for larger $p$ follows from recent work of Bennett, Delcourt, Li, and Postle~\cite{BDLP} on the \emph{generalized Ramsey number} $f(n,p,q)$, that is, the minimum number of colors needed to color $E(K_n)$ so that every copy of $K_p$ sees at least $q$ colors. Until recently, the best upper bound on $f(n,p,q)$ for general $p,q$ was the original bound of Erd\H{o}s and Gy\'arf\'as~\cite{EG} obtained using the Lov\'asz Local Lemma. Bennett, Dudek, and English~\cite{BDE} used a random greedy process to improve this bound by a logarithmic factor for values of $q$ and $p$ with $q\leq (p^2-26p+55)/4$. In~\cite{BDLP}, their result is extended to all values of $p$ and $q$ except at the values $q=\binom{p}{2}-p+2$ and $q=\binom{p}{2}-\left\lfloor\frac{p}{2}\right\rfloor+2$, where the local lemma bound is known to be tight. 

\begin{theorem}[Bennett, Delcourt, Li, Postle~\cite{BDLP}]\label{thm:fnpq} 
For fixed positive integers $p,q$ with $p-2$ not divisible by $\binom{p}{2}-q+1$, we have 
\[f(n,p,q)=O\left(\left(\frac{n^{p-2}}{\log n}\right)^{\frac{1}{\binom{p}{2}-q+1}}\right).\]
\end{theorem}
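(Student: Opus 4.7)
The plan is to analyze a random greedy edge-coloring of $K_n$ using $N = C\bigl(n^{p-2}/\log n\bigr)^{1/(\binom{p}{2}-q+1)}$ colors, where $C=C(p,q)$ is a sufficiently large constant. Fix an arbitrary ordering of the edges and, stepping through them one at a time, color each edge uniformly at random from its current set of \emph{admissible} colors, meaning those whose use would not create a copy of $K_p$ containing at most $q-1$ distinct colors. The goal is to show that, with positive probability, admissible colors remain plentiful throughout and the process finishes with a valid coloring.

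The central quantity to control is the number of inadmissible colors at each uncolored edge $e$. Every inadmissibility traces back to some $p$-set $S \ni e$ together with a partition $\Pi$ of $\binom{S}{2}$ into at most $q-1$ parts such that the partial coloring so far is consistent with $\Pi$ and assigning $e$ a particular color would preserve that consistency. For each such pair $(S,\Pi)$ I would introduce a random variable $Y_{S,\Pi}(t)$ measuring the number of extensions of the current coloring consistent with $\Pi$ at time $t$, and analyze its evolution via the differential equation method. The expected trajectory satisfies an ODE whose solution, combined with a martingale concentration inequality such as Freedman's, shows that a typical edge has at most $o(N)$ inadmissible colors at every step. A logarithmic savings over the naive Lov\'asz local lemma bound is extracted by running the semi-random process up to time $t = 1 - O(1/\log n)$ and then finishing the residual coloring with a standard local lemma argument applied to the much sparser remaining problem.

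The non-divisibility hypothesis on $p-2$ and $\binom{p}{2}-q+1$ enters precisely in the counting of extremal $(S,\Pi)$: when divisibility holds, the worst-case partition $\Pi$ is perfectly balanced and saturates the local lemma exponent, whereas when divisibility fails every $\Pi$ is slightly unbalanced, which the differential equations compound into a full $\log n$ savings. The main obstacle is obtaining \emph{uniform} concentration of the $Y_{S,\Pi}$ across the enormous family of pairs $(S,\Pi)$ while the process evolves; to handle this I would restrict attention to a hierarchy of critical partitions --- those closest to extremal --- and show by a potential function argument that the remaining partitions contribute only lower-order fluctuations. Putting these ingredients together delivers a valid $(p,q)$-coloring of $K_n$ with $O\bigl((n^{p-2}/\log n)^{1/(\binom{p}{2}-q+1)}\bigr)$ colors, as claimed.
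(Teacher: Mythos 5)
This theorem is not proved in the paper; it is imported verbatim from Bennett, Delcourt, Li, and Postle~\cite{BDLP} and used as a black box to deduce the consequence $g(K_n,K_p)=O\bigl((n^{p-2}/\log n)^{2/\binom{p}{2}}\bigr)$. There is therefore no proof in the paper to compare your proposal against.

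Assessed on its own terms, your sketch describes a random greedy coloring process analyzed by the differential equation method with a local-lemma cleanup. The paper itself indicates that this is essentially the approach of Bennett, Dudek, and English~\cite{BDE}, and that their argument only succeeds in the restricted range $q\leq (p^2-26p+55)/4$; the contribution of~\cite{BDLP} was precisely to remove that restriction, and (as suggested by the surrounding discussion in Section~\ref{sec:blackbox} and the generalization to list-coloring and hypergraphs) their tool is the conflict-free hypergraph matching / forbidden submatching framework rather than a direct greedy process. Your proposal does not engage with the obstacle that limited the BDE range, so it is unclear why your greedy analysis would cover all $(p,q)$ with $\binom{p}{2}-q+1\nmid p-2$. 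Your account of the non-divisibility hypothesis is also off: the paper explains that the excluded cases $q=\binom{p}{2}-p+2$ and $q=\binom{p}{2}-\lfloor p/2\rfloor+2$ are ones where the local lemma bound is \emph{known to be tight}, i.e.\ the $\log n$ savings is genuinely impossible there, not merely harder to track through the ODE; framing it as a balanced-vs.-unbalanced partition issue that makes concentration delicate misses the structural reason the hypothesis is necessary. Finally, the sketch is too vague on the concrete form of the tracked random variables and the martingale differences to confirm that the $\log$ factor actually emerges with the claimed exponent.
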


In fact, Theorem~\ref{thm:fnpq} is generalized in~\cite{BDLP} to give an analogous upper bound in a list-coloring setting for any subgraph (not only cliques) and in hypergraphs with higher uniformity. Note that Theorem~\ref{thm:fnpq} with $q=\binom{p}{2}/2+1$ implies that \[g(K_n,K_p)=O\left(\left(\frac{n^{p-2}}{\log n}\right)^{2/\binom{p}{2}}\right).\] Indeed, any $(p,\frac{1}{2}\binom{p}{2}+1)$-coloring of $K_n$ has the stronger condition that some color must appear exactly once in each $p$-clique. 

Our second result focuses on the question of determining $g(K_{n,n},C_4)$. In order to prove an upper bound, we will construct a coloring using the ``forbidden submatching  method" recently introduced by Delcourt and Postle~\cite{DP} and independently by Glock, Joos, Kim, K\"uhn, and Lichev~\cite{GJKKL} as ``conflict-free hypergraph matchings." In~\cite{JM}, Joos and Mubayi use the variant of the method from~\cite{GJKKL} along with a probabilistic argument to show that $f(K_{n,n},C_4,3)=\frac{2}{3}n+o(n)$. Note that this implies $g(K_{n,n},C_4)\leq\frac{2}{3}n+o(n)$.
We use a similar approach to show the stronger upper bound of $g(K_{n,n},C_4)\leq\frac{1}{2}n+o(n)$. As such, combined with a short proof that $g(K_{n,n},C_4)>n/2$, we obtain the following result. 

\begin{theorem}\label{thm:C4inKnn}
We have $g(K_{n,n},C_4)=\frac{1}{2}n+o(n).$
\end{theorem}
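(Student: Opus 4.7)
My plan treats the two inequalities separately.

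\textbf{Lower bound $g(K_{n,n},C_4) > n/2$.} I view an edge-colouring $c : E(K_{n,n}) \to [k]$ as an $n \times n$ matrix over $[k]$, with rows indexed by one side $A$ of the bipartition and columns by the other side $B$. The first step is to observe that any two rows of this matrix agree in at most one column: if rows $a$ and $a'$ agreed at two distinct columns $b$ and $b'$, then $c(ab)=c(a'b)$ and $c(ab')=c(a'b')$, so the $C_4$ on $\{a,a'\}\times\{b,b'\}$ is either monochromatic (when $c(ab)=c(ab')$) or split as a $2{+}2$ pattern between two colours (when $c(ab)\neq c(ab')$); in either case every colour appears an even number of times on this $C_4$, contradicting the hypothesis. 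Consequently the $n$ rows form a code of length $n$ over an alphabet of size $k$ with minimum Hamming distance at least $n-1$. If $k \ge n$ we are done, so assume $k<n$. The Plotkin bound then gives $n \le k(n-1)/(n-k)$, which rearranges to $k \ge n^2/(2n-1) > n/2$.

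\textbf{Upper bound $g(K_{n,n},C_4) \le n/2 + o(n)$.} Here I would mirror the strategy of Joos and Mubayi~\cite{JM} for $f(K_{n,n},C_4,3) \le 2n/3 + o(n)$, via the conflict-free hypergraph matching theorem of Glock, Joos, Kim, K\"uhn, and Lichev~\cite{GJKKL} (equivalently, the forbidden submatching method of Delcourt and Postle~\cite{DP}). Fix a palette of size $k = \lceil n/2\rceil + o(n)$ and set up the standard auxiliary hypergraph $\HH$ whose near-perfect matchings correspond to partial $k$-edge-colourings of $K_{n,n}$. The crucial flexibility over~\cite{JM} is that their conflict family forbids every $C_4$ with fewer than three colours, whereas we only need to forbid monochromatic $C_4$'s and $2{+}2$-coloured $C_4$'s --- the $3{+}1$ patterns are acceptable since the singleton colour is odd. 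Routine codegree computations should show that $\HH$ together with this strictly smaller conflict family satisfies the hypotheses of~\cite{GJKKL}, yielding a partial colouring of all but $o(n^2)$ edges with no forbidden $C_4$. The residual edges are then coloured greedily using $o(n)$ fresh colours, each used sufficiently rarely that no monochromatic or $2{+}2$-coloured $C_4$ can form.

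The principal obstacle is calibrating the target colour-class size to $\sim 2n$ edges, which essentially saturates the Plotkin bound from the lower-bound proof, while keeping the conflict degrees of $\HH$ small enough for~\cite{GJKKL} to apply. Because our forbidden configurations are a strict subset of those in~\cite{JM}, the construction should succeed with a palette of size $n/2+o(n)$ rather than $2n/3+o(n)$, but the codegree estimates and the design of the mop-up phase (to avoid creating a $2{+}2$-split $C_4$ between a fresh colour and already-coloured edges) must be verified carefully.
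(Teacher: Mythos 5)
Your lower bound argument is correct and a nice alternative to the paper's: the paper uses a direct pigeonhole count over pairs $\{u,v\}\subset Y$ that are monochromatic through some $x \in X$, while you pass through the Plotkin bound after observing that the rows of the colouring matrix must form a code of minimum Hamming distance at least $n-1$. Both deliver $g(K_{n,n},C_4) > n/2$, which is all the theorem needs on that side.

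The upper bound is where the real work lives, and your plan is only a sketch that leaves the two decisive steps unproved. First, you never specify the auxiliary hypergraph $\HH$; the paper's choice is to make each colour class a union of vertex-disjoint copies of $C_6$, encoded by $18$-element hyperedges that include not only the six graph edges but also the $\binom{3}{2}+\binom{3}{2}=6$ ``shadow'' pairs of same-side vertices and six colour-tagged vertex copies. This design is not cosmetic: the $C_6$'s are $C_4$-free, so a monochromatic $C_4$ never arises; the shadow pairs prevent two hyperedges of a matching from sharing two vertices on one side, so a $C_4$ with two \emph{adjacent} edges of one colour and two adjacent edges of another is structurally impossible and need not appear in the conflict family; and each colour then covers about $2n$ edges, so $n/2$ colours almost exhaust $E(K_{n,n})$. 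With that scaffolding, only the ``alternating'' $2{+}2$ $C_4$'s (opposite edges sharing a colour) remain as conflicts, and they have size $3$ or $4$. If instead one naively adds every monochromatic and every adjacent-$2{+}2$ $C_4$ to the conflict family, it is not at all clear that the codegree conditions of~\cite{GJKKL} survive, so ``routine codegree computations'' is doing a lot of unearned work here.

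Second, the mop-up phase is not a greedy colouring with rarely used fresh colours. A fresh colour on two leftover edges $xy, x'y'$ creates a bad $C_4$ whenever $xy'$ and $x'y$ already received the same stage-one colour, and bounding the number of such threats per pair $(x,y)$ is precisely why the paper introduces additional $(d,\eps,\CC)$-trackable test functions (the $2$-uniform $w_{x,y}$, the $3$-uniform $w'_{x,y}$, and the $1$-uniform $u_x, u'_{x,y}$), which force the quasirandomness of the matching to yield only $O(n^{1-\delta})$ dangerous leftover edges for each $(x,y)$. The residual graph is then coloured by a uniformly \emph{random} assignment from $o(n)$ fresh colours and the symmetric Lov\'asz Local Lemma is applied to three families of bad events. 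You correctly flag this interaction as the principal obstacle, but ``must be verified carefully'' is not a proof, and the quantitative control needed is exactly what a bare greedy argument does not supply.
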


The rest of the paper is organized as follows. In Section~\ref{sec:K5}, we prove Theorem~\ref{mainthm}. In Section~\ref{sec:blackbox}, we give the necessary preliminaries to state the main tool for the proof of Theorem~\ref{thm:C4inKnn}, namely the theorem on forbidden submatchings given in~\cite{GJKKL}. Finally, in Section~\ref{sec:C4inKnn}, we prove Theorem~\ref{thm:C4inKnn}.

\section{Proof of Theorem~\ref{main1}}\label{sec:K5}
A \emph{$(p,q)$-coloring} of $K_n$ is an edge-coloring in which each $p$-clique receives at least $q$ colors. In 2015, Conlon, Fox, Lee, and Sudakov~\cite{CFLS} constructed a $(p,p-1)$-coloring with $n^{o(1)}$ colors. In 2017, Cameron and Heath~\cite{CH1} used a modified version of their $(5,4)$-coloring as part of their construction of a $(5,5)$-coloring with $n^{1/3+o(1)}$ colors. We will prove that this ``Modified CFLS" $(5,4)$-coloring using $n^{o(1)}$ colors guarantees that every $K_5$ contains some color an odd number of times.

Let $n=2^{m^2}$ for some positive integers $m$. We will view the vertices of $K_n$ as binary strings of length $m^2$; that is, $V=\{0,1\}^{m^2}$. For each $v\in V$, let $v^{(i)}$ denote the $i$th block of bits of length $m$ in $v$, so \[v=\left(v^{(1)}, v^{(2)}, \ldots, v^{(m)}\right),\] where each $v^{(i)}\in \{0,1\}^{m}$. 

Note that we can assign a linear order to the vertices by considering each to be an integer represented in binary and taking the standard ordering of these integers. That is, $x<y$ if and only if the first bit of difference between $x$ and $y$ is zero in $x$ and one in $y$. Furthermore, each $m$-block of our vertices can be viewed as a binary representation of an integer from 0 to $2^{m}-1$, so the $m$-blocks can be ordered in the same way. 

Let $x,y\in V$ such that $x<y$. Let $i$ be the first index for which $x^{(i)}\neq y^{(i)}$, and for each $k\in[m]$, let $i_k$ be the first index at which a bit of $x^{(k)}$ differs from the corresponding bit of $y^{(k)}$, or $i_k=0$ if $x^{(k)}=y^{(k)}$. In addition, for each $k\in [m]$, let \[\delta_k=\begin{cases} +1 & x^{(k)}\leq y^{(k)}, \\ -1 & x^{(k)}>y^{(k)}.\end{cases}\]
The \emph{Modified CFLS coloring} assigns to the edge $xy$ the color
\[\varphi(xy)=\left(\left(i,\{x,y\}\right),i_1,i_2,\ldots,i_{m},\delta_1,\delta_2,\ldots,\delta_{m}\right).\]

It was shown in \cite{CH1} that the number of colors in the  Modified CFLS coloring is $2^{O(\sqrt{\log n}\log\log n)}.$
We will now show that if $K_n$ is edge-colored by 
the Modified CFLS coloring then every copy of $K_5$ has an odd color class.

Let $E(K_n)$ be colored by the Modified CFLS coloring $\varphi$ given by Cameron and Heath~\cite{CH1}.
Then every copy of $K_5$ contains at least 4 distinct colors.
If a copy of $K_5$ contains at least six colors, then clearly some color appears in this $K_5$ exactly once, giving us the odd color we desire. Therefore, we only need to consider copies of $K_5$ which contain four or five colors under $\varphi$. 
The case where $K_5$ contains exactly four colors is easy to understand; indeed, Cameron and Heath~\cite{CH2} characterized the colorings of $K_5$ with exactly four colors which can appear under $\varphi$.

\begin{theorem} Let $p\geq 3$ and let the edges of $K_n$ be colored with the Modified CFLS $(p,p-1)$-coloring. The only $p$-cliques that contain exactly $p-1$ distinct edge-colors are isomorphic (as edge-colored graphs) to one of the edge-colored $p$-cliques given in the definition below.
\end{theorem}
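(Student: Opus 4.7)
The plan is to analyze the structure forced on a $p$-clique $K$ that uses exactly $p-1$ colors under the Modified CFLS coloring $\varphi$. Since $\binom{p}{2} > p-1$ for $p \geq 3$, at least one color class restricted to $K$ must contain two or more edges, so the crux is to understand when two distinct edges of $K$ can receive the same color. Two edges share a $\varphi$-color only if they agree in the first-differing-block index $i$, in the full tuple $(i_1,\ldots,i_m)$ of first bit differences across all blocks, and in the sign tuple $(\delta_1,\ldots,\delta_m)$, together with the remaining data recorded in the first coordinate. The first step is to translate each of these coincidences into rigid structural constraints on the block representations of the four endpoints.

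Starting with a single monochromatic pair of edges $\{x,y\}$ and $\{x',y'\}$, I would show that equality of $i$ forces the first $i-1$ blocks to agree within each pair and to match between the pairs, while equality of each $(i_k,\delta_k)$ forces the two pairs to have their bit-flip locations and orientations synchronized block by block. The upshot is that $\{x',y'\}$ is obtained from $\{x,y\}$ by a tightly prescribed modification of the lower-order bits of both endpoints simultaneously, so the four vertices lie in a very constrained configuration in $\{0,1\}^{m^2}$. I would then iterate: a third edge sharing the same color imposes further coordinate agreements, and the cumulative effect is that each monochromatic subgraph of $K$ is determined, up to color-preserving isomorphism, by a small amount of combinatorial data (an index $i$, a choice of active block coordinates, and a sign pattern).

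With this dictionary in hand, I would split into cases according to the partition $(c_1,c_2,\ldots,c_{p-1})$ of $\binom{p}{2}$ recording the color-class multiplicities inside $K$. In each case I would enumerate the possible graph-theoretic shapes of the repeated-color edges (matchings, stars, paths, triangles, and their unions) on at most $p$ vertices, and use the constraints from the previous step to decide which shapes can actually arise under $\varphi$. The surviving configurations would then be matched directly against the explicit list of edge-colored $p$-cliques in the definition that follows the theorem statement, giving the classification up to isomorphism as edge-colored graphs.

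The main obstacle I anticipate is the combinatorial explosion of partitions and shapes as $p$ grows. To control it I would first prove a handful of local forbidden-subgraph rules inside $K$ -- for instance, that no monochromatic triangle or monochromatic $K_{1,3}$ can appear -- each derived by showing that the required bit-synchronization identities on three edges simultaneously become mutually inconsistent. These local obstructions should collapse the admissible color-class shapes to essentially matchings and short paths, after which only a handful of partitions of $\binom{p}{2}$ into $p-1$ parts remain, and the final matching with the list given in the definition reduces to a short direct check.
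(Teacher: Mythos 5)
This theorem is stated in the paper without proof; it is quoted from Cameron and Heath~\cite{CH2}, so there is no in-paper argument to compare your attempt against. Judged on its own merits, your proposal has a concrete flaw that would derail the reduction at its core.

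The local obstruction you plan to prove first, that no monochromatic $K_{1,3}$ can appear inside the clique, is false, and the analogous claim fails for larger stars and complete bipartite graphs as well. A leftover structure with bipartition $A\cup B$, $|A|=a$, $|B|=b$, puts a single color $\alpha$ on every cross edge, producing a monochromatic $K_{a,b}$ inside the $p$-clique. Taking $a=1$ gives monochromatic stars $K_{1,b}$ for every $b$ up to $p-1$; the middle configuration in Figure~\ref{fig:leftover K5} is literally a red $K_{1,4}$ joined to a leftover $K_4$, and the right configuration contains a red $K_{2,3}$. So the step where you collapse the admissible repeated-color shapes to ``essentially matchings and short paths'' discards almost all of the configurations the theorem says you must recover. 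That is not a cosmetic error: the whole enumeration that follows rests on it.

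Beyond that specific gap, the approach does not engage with the recursive form of the target. The leftover structure is defined by a recursion on vertex subsets, and it mirrors the hierarchical nature of the Modified CFLS coloring: the first block index $i$ at which any two vertices of the clique differ determines a canonical split of the vertex set into groups agreeing on $v^{(i)}$, the cross edges between the two extremal groups are forced to share a color (because $i$, $i_1,\ldots,i_m$, $\delta_1,\ldots,\delta_m$ and the $0$-coordinate all coincide), and that color is automatically disjoint from the colors appearing inside each group. The natural proof therefore inducts on $|S|$ and reconstructs exactly the bipartition $A,B$ in the definition. A flat enumeration of partitions of $\binom{p}{2}$ into $p-1$ parts, even with correct local obstructions, does not scale in $p$ and does not produce this recursive decomposition, which is what the theorem is actually asserting.
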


\begin{definition}
Given an edge-coloring $f:E(K_n)\rightarrow C$, we say that a subset $S \subseteq V(K_n)$ has a \emph{leftover structure under $f$} if either $|S| = 1$ or there exists a bipartition of $S$ into nonempty sets $A$ and $B$ for which
\begin{itemize}
    \item $A$ and $B$ each have a leftover structure under $f$;
    \item $f(A)\cap f(B)=\emptyset$; and 
    \item there is a fixed color $\alpha\in C$ such that $f(a,b)=\alpha$ for all $a\in A$ and all $b\in B$, and $\alpha\notin f(A)$ and $\alpha\notin f(B)$.
\end{itemize}
\end{definition}

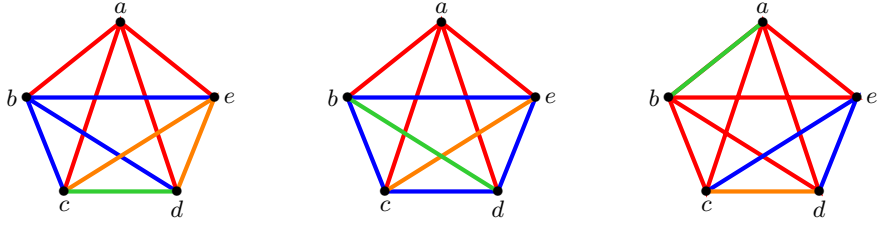
\begin{figure}
\begin{center}
\begin{tikzpicture}[scale=1]
     \coordinate (a) at (1.75,2.75);
     \coordinate (b) at (.5,1.75);
     \coordinate (c) at (1,.5);
     \coordinate (d) at (2.5,.5);
     \coordinate (e) at (3,1.75);

     \draw[ultra thick, red] (e)--(a)--(b) (c)--(a)--(d);
     \draw[ultra thick, blue] (c)--(b)--(e) (b)--(d);
     \draw[ultra thick, orange] (c)--(e)--(d);
     \draw[ultra thick, green] (c)--(d);

     \node[black] at (a) {\small $\bullet$};
     \node[black, above] at (a) {\small $a$};
     \node[black] at (b) {\small $\bullet$};
     \node[black, left] at (b) {\small $b$};
     \node[black] at (c) {\small $\bullet$};
     \node[black, below] at (c) {\small $c$};
     \node[black] at (d) {\small $\bullet$};
     \node[black, below] at (d) {\small $d$};
     \node[black] at (e) {\small $\bullet$};
     \node[black, right] at (e) {\small $e$};
\end{tikzpicture}
\hspace{.3in}
\begin{tikzpicture}[scale=1]
     \coordinate (a) at (1.75,2.75);
     \coordinate (b) at (.5,1.75);
     \coordinate (c) at (1,.5);
     \coordinate (d) at (2.5,.5);
     \coordinate (e) at (3,1.75);

     \draw[ultra thick, red] (e)--(a)--(b) (c)--(a)--(d);
     \draw[ultra thick, blue] (b)--(e)--(d)--(c)--(b);
     \draw[ultra thick, orange] (c)--(e);
     \draw[ultra thick, green] (b)--(d);

     \node[black] at (a) {\small $\bullet$};
     \node[black, above] at (a) {\small $a$};
     \node[black] at (b) {\small $\bullet$};
     \node[black, left] at (b) {\small $b$};
     \node[black] at (c) {\small $\bullet$};
     \node[black, below] at (c) {\small $c$};
     \node[black] at (d) {\small $\bullet$};
     \node[black, below] at (d) {\small $d$};
     \node[black] at (e) {\small $\bullet$};
     \node[black, right] at (e) {\small $e$};
\end{tikzpicture}
\hspace{.3in}
\begin{tikzpicture}[scale=1]
     \coordinate (a) at (1.75,2.75);
     \coordinate (b) at (.5,1.75);
     \coordinate (c) at (1,.5);
     \coordinate (d) at (2.5,.5);
     \coordinate (e) at (3,1.75);

     \draw[ultra thick, red] (b)--(e)--(a)--(b)--(c)--(a)--(d)--(b);
     \draw[ultra thick, blue] (c)--(e)--(d);
     \draw[ultra thick, green] (a)--(b);
     \draw[ultra thick, orange] (c)--(d);

     \node[black] at (a) {\small $\bullet$};
     \node[black, above] at (a) {\small $a$};
     \node[black] at (b) {\small $\bullet$};
     \node[black, left] at (b) {\small $b$};
     \node[black] at (c) {\small $\bullet$};
     \node[black, below] at (c) {\small $c$};
     \node[black] at (d) {\small $\bullet$};
     \node[black, below] at (d) {\small $d$};
     \node[black] at (e) {\small $\bullet$};
     \node[black, right] at (e) {\small $e$};
\end{tikzpicture}
\caption{The three leftover colorings of $K_5$ with four colors under $\varphi$}
\label{fig:leftover K5}
\end{center}
\end{figure}

In particular, there are only three such leftover configurations in the $p=5$ setting, shown in Figure~\ref{fig:leftover K5} below. Note that in each case, there is a color which appears exactly once. Thus, it suffices to consider colorings of $K_5$ with exactly five colors. 

Assume towards a contradiction that $\varphi$ permits some coloring of $K_5$ with exactly 5 colors, each appearing an even number of times. Call these colors $\alpha, \beta, \gamma, \pi,$ and $\theta$. Then each color class is either a matching of two disjoint edges or a path with two adjacent edges. We say that such  colorings of $K_5$ are 2-2-2-2-2 colorings.

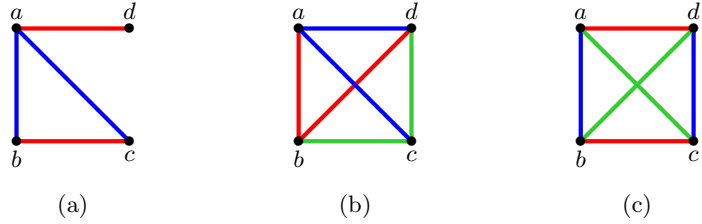
\begin{figure}
\begin{center}
\begin{subfigure}[b]{0.22\textwidth}
\begin{center}
\begin{tikzpicture}[scale=1]
     \coordinate (a) at (0,1.5);
     \coordinate (b) at (0,0);
     \coordinate (c) at (1.5,0);
     \coordinate (d) at (1.5,1.5);

     \draw[ultra thick, red] (a)--(d) (c)--(b);
     \draw[ultra thick, blue] (c)--(a)--(b);

     \node[black] at (a) {\small $\bullet$};
     \node[black, above] at (a) {\small $a$};
     \node[black] at (b) {\small $\bullet$};
     \node[black, below] at (b) {\small $b$};     
     \node[black] at (c) {\small $\bullet$};
     \node[black, below] at (c) {\small $c$};
     \node[black] at (d) {\small $\bullet$};
     \node[black, above] at (d) {\small $d$};
    \end{tikzpicture}
    \caption{}
          \label{fig:forb1}
          \end{center}
     \end{subfigure}
\begin{subfigure}[b]{0.22\textwidth}
\begin{center}
\begin{tikzpicture}[scale=1]
     \coordinate (a) at (0,1.5);
     \coordinate (b) at (0,0);
     \coordinate (c) at (1.5,0);
     \coordinate (d) at (1.5,1.5);

     \draw[ultra thick, red] (a)--(b)--(d);
     \draw[ultra thick, blue] (d)--(a)--(c);
     \draw[ultra thick, green] (d)--(c)--(b);

     \node[black] at (a) {\small $\bullet$};
     \node[black, above] at (a) {\small $a$};
     \node[black] at (b) {\small $\bullet$};
     \node[black, below] at (b) {\small $b$};     
     \node[black] at (c) {\small $\bullet$};
     \node[black, below] at (c) {\small $c$};
     \node[black] at (d) {\small $\bullet$};
     \node[black, above] at (d) {\small $d$};
    \end{tikzpicture}
    \caption{}
          \label{fig:forb2}
          \end{center}
     \end{subfigure}
\begin{subfigure}[b]{0.22\textwidth}
\begin{center}
\begin{tikzpicture}[scale=1]
     \coordinate (a) at (0,1.5);
     \coordinate (b) at (0,0);
     \coordinate (c) at (1.5,0);
     \coordinate (d) at (1.5,1.5);

     \draw[ultra thick, red] (c)--(b) (d)--(a) ;
     \draw[ultra thick, blue] (a)--(b) (c)--(d);
     \draw[ultra thick, green] (a)--(c) (b)--(d);

     \node[black] at (a) {\small $\bullet$};
     \node[black, above] at (a) {\small $a$};
     \node[black] at (b) {\small $\bullet$};
     \node[black, below] at (b) {\small $b$};     
     \node[black] at (c) {\small $\bullet$};
     \node[black, below] at (c) {\small $c$};
     \node[black] at (d) {\small $\bullet$};
     \node[black, above] at (d) {\small $d$};
    \end{tikzpicture}
    \caption{}
          \label{fig:forb3}
     \end{center}
     \end{subfigure}
\caption{Forbidden configurations under $\varphi$ with four vertices}
\label{fig:forbidden 4vxs}
\end{center}
\end{figure}

\begin{lemma}\label{neighborhood3}
   No vertex in a 2-2-2-2-2 coloring of $K_5$ can be incident to edges of only two colors. 
\end{lemma}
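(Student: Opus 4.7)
The plan is a contradiction argument that reduces quickly to the 4-vertex forbidden configurations of Figure~\ref{fig:forbidden 4vxs}. First, I would argue by degree counting: if a vertex $v$ is incident only to edges of two colors $\alpha, \beta$, then since $v$ has degree $4$ in $K_5$ and every color class has exactly two edges, the four edges at $v$ must split as exactly two of color $\alpha$ and two of color $\beta$. Hence each of the color classes $\alpha$ and $\beta$ is contained entirely at $v$, forming a $P_3$ centered at $v$. In particular, the six edges of the induced $K_4$ on the neighborhood $N(v)$ are colored using only the remaining three colors $\gamma, \pi, \theta$, each appearing exactly twice.

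Next, I would classify the 2-2-2 edge-colorings of $K_4$. A 2-edge color class in $K_4$ is either a perfect matching or a $P_3$. Observe that two perfect matchings of $K_4$ together form a 4-cycle whose complement is again a perfect matching, so two matching-classes force the third class to be a matching as well. Therefore every 2-2-2 coloring of $K_4$ falls into one of three cases: (i) three perfect matchings, (ii) one perfect matching together with two $P_3$'s, or (iii) three $P_3$'s.

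Finally, I would match each case to a forbidden 4-vertex configuration under $\varphi$. Case (i) is exactly Figure~\ref{fig:forb3} and case (iii) is exactly Figure~\ref{fig:forb2}, so both are forbidden outright. In case (ii), up to relabeling of the four vertices the matching class is $\{ad, bc\}$, and the two $P_3$ classes together cover the remaining edges $\{ab,ac,bd,cd\}$; they must therefore be either $\{\{ab,ac\},\{bd,cd\}\}$ or $\{\{ab,bd\},\{ac,cd\}\}$. In either subcase, pairing the matching class with either of the two $P_3$ classes produces exactly the configuration of Figure~\ref{fig:forb1} on $N(v)$, which is also forbidden. Each possibility yields a contradiction, so no such vertex $v$ exists. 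The main (minor) obstacle is simply verifying that the third subcase, (ii), always contains Figure~\ref{fig:forb1}; once the neighborhood coloring is identified as a 2-2-2 coloring of $K_4$, the bookkeeping above handles it immediately.
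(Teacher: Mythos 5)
Your proposal is correct and follows the same route as the paper: both reduce the claim to the forbidden 4-vertex configurations of Figure~\ref{fig:forbidden 4vxs}. You fill in two details the paper treats tersely---that the $K_4$ on $N(v)$ inherits a 2-2-2 coloring, and that the matching-plus-two-paths case is excluded because it \emph{contains} the subconfiguration of Figure~\ref{fig:forb1} (rather than being isomorphic to one of the three figures outright)---which is a sound clarification but not a different method.
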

\begin{proof}
    If there was such a vertex, then the other four vertices would form a copy of $K_4$ with three colors appearing twice each. But, each such coloring of $K_4$ is one of the configurations shown in Figure~\ref{fig:forbidden 4vxs}, which were shown to be forbidden under $\varphi$ in~\cite{CH1}. 
\end{proof}

Now we will prove a series of lemmas showing that several other common configurations (in Figure~\ref{fig:forbidden 5vxs}) are forbidden under $\varphi$. 
Throughout these proofs, when referring to some color $\alpha$, we will let $\alpha_0$ denote the 0-coordinate of the color (of the form $(i,\{x,y\})$) and let $\alpha_k$ denote the $k$-coordinate of the color, that is, the index of the first bit of difference between the $k$th blocks. Furthermore, 
if $x,y \in \{0,1\}^{m^2}$ are two vectors
and $x^{(k)}=y^{(k)}$ (where $x^{(k)}$ is the $k$th $m$-block of $x$), then we  say that $x$ and $y$ \emph{agree at $k$}; otherwise, we say $x$ and $y$ \emph{disagree at $k$}.

\begin{figure}
\begin{center}
\begin{subfigure}[b]{0.22\textwidth}
\begin{center}
\begin{tikzpicture}[scale=1]
     \coordinate (a) at (1.75,2.75);
     \coordinate (b) at (.5,1.75);
     \coordinate (c) at (1,.5);
     \coordinate (d) at (2.5,.5);
     \coordinate (e) at (3,1.75);

     \draw[ultra thick, blue] (b)--(a) (e)--(d);
     \draw[ultra thick, green] (a)--(e) (b)--(c);
     \draw[ultra thick, red] (c)--(d) (b)--(e);

     \node[black] at (a) {\small $\bullet$};
     \node[black, above] at (a) {\small $a$};
     \node[black] at (b) {\small $\bullet$};
     \node[black, left] at (b) {\small $b$};
     \node[black] at (c) {\small $\bullet$};
     \node[black, below] at (c) {\small $c$};
     \node[black] at (d) {\small $\bullet$};
     \node[black, below] at (d) {\small $d$};
     \node[black] at (e) {\small $\bullet$};
     \node[black, right] at (e) {\small $e$};
\end{tikzpicture}
    \caption{}
          \label{fig:A}
     \end{center}\end{subfigure}
\begin{subfigure}[b]{0.22\textwidth}
\begin{center}
\begin{tikzpicture}[scale=1]
     \coordinate (a) at (1.75,2.75);
     \coordinate (b) at (.5,1.75);
     \coordinate (c) at (1,.5);
     \coordinate (d) at (2.5,.5);
     \coordinate (e) at (3,1.75);

     \draw[ultra thick, red] (b)--(c)--(a);
     \draw[ultra thick, green] (e)--(d)--(a);
     \draw[ultra thick, orange] (a)--(e)--(c);
     \draw[ultra thick, blue] (a)--(b)--(d);

     \node[black] at (a) {\small $\bullet$};
     \node[black, above] at (a) {\small $a$};
     \node[black] at (b) {\small $\bullet$};
     \node[black, left] at (b) {\small $b$};
     \node[black] at (c) {\small $\bullet$};
     \node[black, below] at (c) {\small $c$};
     \node[black] at (d) {\small $\bullet$};
     \node[black, below] at (d) {\small $d$};
     \node[black] at (e) {\small $\bullet$};
     \node[black, right] at (e) {\small $e$};
    \end{tikzpicture}
    \caption{}
          \label{fig:B}
     \end{center}\end{subfigure}
\begin{subfigure}[b]{0.22\textwidth}
\begin{center}
\begin{tikzpicture}[scale=1]
     \coordinate (a) at (1.75,2.75);
     \coordinate (b) at (.5,1.75);
     \coordinate (c) at (1,.5);
     \coordinate (d) at (2.5,.5);
     \coordinate (e) at (3,1.75);

     \draw[ultra thick, red] (b)--(e) (d)--(c);
     \draw[ultra thick, blue] (b)--(a);
     \draw[ultra thick, green] (b)--(c);
     \draw[ultra thick, orange] (a)--(e);
     \draw[ultra thick, purple] (d)--(e);

     \node[black] at (a) {\small $\bullet$};
     \node[black, above] at (a) {\small $a$};
     \node[black] at (b) {\small $\bullet$};
     \node[black, left] at (b) {\small $b$};
     \node[black] at (c) {\small $\bullet$};
     \node[black, below] at (c) {\small $c$};
     \node[black] at (d) {\small $\bullet$};
     \node[black, below] at (d) {\small $d$};
     \node[black] at (e) {\small $\bullet$};
     \node[black, right] at (e) {\small $e$};
    \end{tikzpicture}
    \caption{}
          \label{fig:C}
     \end{center}\end{subfigure}  
     
\begin{subfigure}[b]{0.22\textwidth}
\begin{center}
\begin{tikzpicture}[scale=1]
     \coordinate (a) at (1.75,2.75);
     \coordinate (b) at (.5,1.75);
     \coordinate (c) at (1,.5);
     \coordinate (d) at (2.5,.5);
     \coordinate (e) at (3,1.75);

     \draw[ultra thick, green] (b)--(a)--(c);
     \draw[ultra thick, red] (b)--(c) (d)--(e);
     \draw[ultra thick, blue] (c)--(d)--(a);

     \node[black] at (a) {\small $\bullet$};
     \node[black, above] at (a) {\small $a$};
     \node[black] at (b) {\small $\bullet$};
     \node[black, left] at (b) {\small $b$};
     \node[black] at (c) {\small $\bullet$};
     \node[black, below] at (c) {\small $c$};
     \node[black] at (d) {\small $\bullet$};
     \node[black, below] at (d) {\small $d$};
     \node[black] at (e) {\small $\bullet$};
     \node[black, right] at (e) {\small $e$};
    \end{tikzpicture}
    \caption{}
          \label{fig:D}
     \end{center}\end{subfigure}
\begin{subfigure}[b]{0.22\textwidth}
\begin{center}
\begin{tikzpicture}[scale=1]
     \coordinate (a) at (1.75,2.75);
     \coordinate (b) at (.5,1.75);
     \coordinate (c) at (1,.5);
     \coordinate (d) at (2.5,.5);
     \coordinate (e) at (3,1.75);

     \draw[ultra thick, red] (a)--(b)--(e);
     \draw[ultra thick, blue] (b)--(c)--(a);
     \draw[ultra thick, green] (c)--(d)--(b);
     \draw[ultra thick, orange] (d)--(e)--(c);
     \draw[ultra thick, purple] (e)--(a)--(d);
     
     \node[black] at (a) {\small $\bullet$};
     \node[black, above] at (a) {\small $a$};
     \node[black] at (b) {\small $\bullet$};
     \node[black, left] at (b) {\small $b$};
     \node[black] at (c) {\small $\bullet$};
     \node[black, below] at (c) {\small $c$};
     \node[black] at (d) {\small $\bullet$};
     \node[black, below] at (d) {\small $d$};
     \node[black] at (e) {\small $\bullet$};
     \node[black, right] at (e) {\small $e$};
    \end{tikzpicture}
    \caption{}
          \label{fig:E}
     \end{center}\end{subfigure}
\caption{Forbidden configurations under $\varphi$ with five vertices}
\label{fig:forbidden 5vxs}
\end{center}
\end{figure}
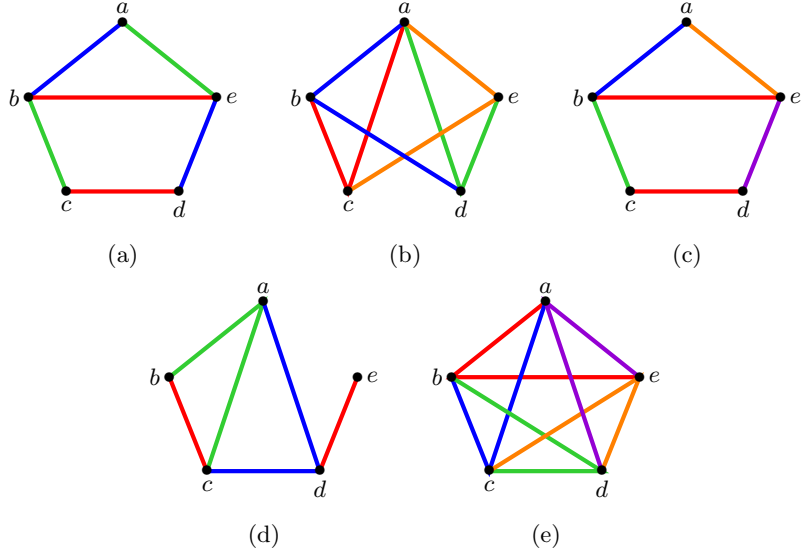

\begin{lemma}\label{lem:candy corn}
The configuration with five distinct vertices $a,b,c,d,e\in V$ for which $\varphi(be)=\varphi(cd)$, $\varphi(ab)=\varphi(de)$, and $\varphi(bc)=\varphi(ae)$, as in Figure~\ref{fig:A}, is forbidden.
\end{lemma}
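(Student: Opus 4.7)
The plan is to argue by contradiction. Suppose five distinct vertices $a,b,c,d,e$ realize the configuration. For an edge $xy$, let $i(xy)$ denote the first block at which the $m$-blocks of $x$ and $y$ differ; the three color equalities force $i(be)=i(cd)=:p$, $i(ab)=i(de)=:q$, and $i(bc)=i(ae)=:r$, and in each color class the unordered pair of block-$i$ values at the endpoints agrees, as do the coordinates $i_k$ and $\delta_k$ for every $k$. Set $k^*:=\min(p,q,r)$. Since every color-class edge has endpoints that agree on all blocks $k<k^*$, the five vertices agree on blocks $1,\ldots,k^*-1$.

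Next I would case-split on which of $p,q,r$ equal $k^*$. Recall that an edge whose first-disagreement block equals $k^*$ has a nondegenerate unordered pair $\{x^{(k^*)},y^{(k^*)}\}$ (matched within its color class), while an edge whose first-disagreement block exceeds $k^*$ has endpoints agreeing at block $k^*$. The sub-cases where all three of $p,q,r$ equal $k^*$, where exactly one of them equals $k^*$, and where $p=q=k^*<r$ or $p=r=k^*<q$ each collapse at block $k^*$ alone: the constraints force one of the required inequalities $b^{(k^*)}\neq e^{(k^*)}$, $a^{(k^*)}\neq e^{(k^*)}$, or $b^{(k^*)}\neq c^{(k^*)}$ to fail.

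The main obstacle is the remaining sub-case $q=r=k^*<p$, where the block-$k^*$ constraints are consistent. Combining $\{a,b\}^{(k^*)}=\{d,e\}^{(k^*)}$ and $\{b,c\}^{(k^*)}=\{a,e\}^{(k^*)}$ with $b^{(k^*)}=e^{(k^*)}$ and $c^{(k^*)}=d^{(k^*)}$ (both forced by $p>k^*$) yields $a^{(k^*)}=c^{(k^*)}=d^{(k^*)}=:\alpha$ and $b^{(k^*)}=e^{(k^*)}=:\beta$ with $\alpha\neq\beta$. Assume without loss of generality that $\alpha<\beta$; then the vertex orderings are $a<b$, $c<b$, $a<e$, $d<e$. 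I would then descend to block $p$, where $\varphi(be)=\varphi(cd)$ gives $\{b^{(p)},e^{(p)}\}=\{c^{(p)},d^{(p)}\}=:\{\mu,\nu\}$ for some $\mu<\nu$, and examine the four possible distributions of $\mu,\nu$ among these two pairs.

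Two of the four distributions give $b^{(p)}=c^{(p)}$ and $d^{(p)}=e^{(p)}$, so $i_p$ vanishes on both edges $bc$ and $de$; matching against $\varphi(bc)=\varphi(ae)$ and $\varphi(ab)=\varphi(de)$ then forces $a^{(p)}=e^{(p)}$ and $a^{(p)}=b^{(p)}$ simultaneously, contradicting $b^{(p)}\neq e^{(p)}$. In the other two, both $b^{(p)}\neq c^{(p)}$ and $d^{(p)}\neq e^{(p)}$, and both $i_p$-values equal the first bit $j$ at which $\mu$ and $\nu$ differ; the $\delta_p$ parities in the two color equalities together with the vertex orderings then pin $a^{(p)}$ simultaneously to opposite sides of $b^{(p)}$ and of $e^{(p)}$ within block $p$, a contradiction. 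The case $\alpha>\beta$ is handled symmetrically.
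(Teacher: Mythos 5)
Your proof is correct, but it takes a considerably more circuitous path than the paper's, and the extra machinery is unnecessary. The paper works entirely at the block $i=p$ given by $\alpha_0=(i,\{x,y\})$, without ever asking which of $p,q,r$ is smallest: it splits into the two alignments of $\{b^{(i)},e^{(i)}\}$ versus $\{c^{(i)},d^{(i)}\}$ (your ``four distributions'' collapse to two by swapping $x\leftrightarrow y$), and in both cases derives the contradiction purely from the block-$i$ bit coordinate $i_p$. In the ``aligned'' case $b^{(i)}=c^{(i)}$, $d^{(i)}=e^{(i)}$, it forces $a^{(i)}$ to equal both $x$ and $y$; in the ``crossed'' case, it forces $a^{(i)}_t$ to disagree with both $x_t$ and $y_t$ at the first bit $t$ of disagreement between $x$ and $y$ --- impossible over $\{0,1\}$. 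This bit-level argument needs neither the vertex orderings nor the $\delta$-coordinates. Your preliminary analysis at $k^*=\min(p,q,r)$ (five sub-cases, four of which you correctly dispatch at block $k^*$ alone) exists only to establish the orderings $a<b$, $c<b$, $a<e$, $d<e$, which you then need because you appeal to $\delta_p$ rather than to the $i_p$ bit data. Your argument is sound --- I checked the sub-case elimination and the final $\delta_p$ contradiction, and they go through --- but if you replace the $\delta_p$ step with the bit-level observation that $a^{(p)}_t$ cannot disagree with both $b^{(p)}_t$ and $e^{(p)}_t$, the entire $k^*$-casework becomes superfluous and you recover the paper's two-case proof at block $p$.
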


\begin{proof}
Assume towards a contradiction that for distinct vertices $a,b,c,d,e$, we have $\varphi(be)=\varphi(cd)=\alpha$, $\varphi(ab)=\varphi(de)=\beta$, and $\varphi(bc)=\varphi(ae)=\gamma$. Let $\alpha_0=(i,\{x,y\})$.
First suppose $b^{(i)}=c^{(i)}=x$ and $e^{(i)}=d^{(i)}=y$. Then $\beta_i=0$ since  $d$ and $e$ agree at $i$, and $\gamma_i=0$ since $b$ and $c$ agree at $i$. But then  $a^{(i)}=x$ (because  $\varphi(ab)=\beta$, $\beta_i=0$ and $b^{(i)}=x$), and similarly $a^{(i)}=y$ (because  $\varphi(ae)=\gamma$, $\gamma_i=0$ and $e^{(i)}=y$), a contradiction since $x\neq y$. 
So, instead suppose that $b^{(i)}=d^{(i)}=x$ and $c^{(i)}=e^{(i)}=y$. Then $\beta_i=t$, the first index of difference between $d^{(i)}=x$ and $e^{(i)}=y$. Similarly, $\gamma_i=t$. Let $a^{(i)}=w$. Then since $\varphi(ab)=\beta$ and $\beta_i=t$, we know $x_j=w_j$ for all $1\leq j\leq t-1$ and $x_t\neq w_t$. Similarly, since $\varphi(ae)=\gamma$ and $\gamma_i=t$, we have $y_j=w_j$ for all $1\leq j\leq t-1$ and $y_t\neq w_t.$ But since $\{x_t,y_t\}=\{0,1\}$, this leaves no choice for $w_t$, a contradiction. 
\end{proof}

\begin{lemma}\label{fig:1matching}
The configuration with five distinct vertices $a,b,c,d,e\in V$ for which $\varphi(ac)=\varphi(bc)$, $\varphi(ab)=\varphi(bd)$,  $\varphi(ad)=\varphi(de)$, and $\varphi(ae)=\varphi(ec)$, 
as in Figure~\ref{fig:B}, is forbidden.
\end{lemma}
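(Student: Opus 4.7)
The plan is to assume for contradiction that the configuration exists, setting $\alpha = \varphi(ac) = \varphi(bc)$, $\beta = \varphi(ab) = \varphi(bd)$, $\gamma = \varphi(ad) = \varphi(de)$, and $\pi = \varphi(ae) = \varphi(ec)$. For each color $\chi \in \{\alpha, \beta, \gamma, \pi\}$, let $i_\chi$ denote the block index recorded in the zeroth coordinate $\chi_0$; equivalently, $i_\chi$ is the first $m$-block position at which the two endpoints of either $\chi$-edge disagree.

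First I will extract the local $m$-block identity that each cherry imposes. Because $\varphi(ac)=\varphi(bc)$, the unordered pair of $i_\alpha$-blocks satisfies $\{a^{(i_\alpha)},c^{(i_\alpha)}\} = \{b^{(i_\alpha)},c^{(i_\alpha)}\}$, and together with $a^{(i_\alpha)} \ne c^{(i_\alpha)}$ this forces $a^{(i_\alpha)} = b^{(i_\alpha)}$. Analogous reasoning at the other three cherries yields $a^{(i_\beta)} = d^{(i_\beta)} \ne b^{(i_\beta)}$, $a^{(i_\gamma)} = e^{(i_\gamma)} \ne d^{(i_\gamma)}$, and $a^{(i_\pi)} = c^{(i_\pi)} \ne e^{(i_\pi)}$.

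Next I will show that the four indices must all coincide. Suppose, for instance, that $i_\alpha < i_\pi$: then both $ae$ and $ce$ first disagree at block $i_\pi$, so they agree at every earlier block, giving $a^{(i_\alpha)} = e^{(i_\alpha)} = c^{(i_\alpha)}$ and contradicting $a^{(i_\alpha)} \ne c^{(i_\alpha)}$. Thus $i_\alpha \ge i_\pi$. Cycling this same argument through the remaining cherries gives $i_\pi \ge i_\gamma$, $i_\gamma \ge i_\beta$, and $i_\beta \ge i_\alpha$, and hence a common value $i_\alpha = i_\beta = i_\gamma = i_\pi =: i$.

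At this shared block index $i$, the $\alpha$-cherry identity gives $a^{(i)} = b^{(i)}$, while the $\beta$-cherry identity gives $a^{(i)} \ne b^{(i)}$, which is the desired contradiction. I expect the only nontrivial point to be checking that the four index comparisons close up into a single cyclic chain of inequalities; this is where the cyclic symmetry of the cherries' central vertices (forming a directed $4$-cycle $b \to d \to e \to c \to b$) is used, but once the cycle is written down the four comparisons are each a routine application of the zeroth-coordinate identities.
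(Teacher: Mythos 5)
Your proof is correct but takes a genuinely different route from the paper's. The paper's argument fixes the block index $i = i_\alpha$ and propagates block-value equality along $b \to d \to e$ using the per-block coordinates $\beta_i,\gamma_i,\pi_i$ of the Modified CFLS color: $a^{(i)}=b^{(i)}$ gives $\beta_i=0$, so $\varphi(bd)=\beta$ forces $d^{(i)}=b^{(i)}$, then $\gamma_i=0$ forces $e^{(i)}=d^{(i)}$, and finally $\pi_i$ must be simultaneously zero (as $a^{(i)}=e^{(i)}$) and nonzero (as $c^{(i)}\neq e^{(i)}$). Your argument never reads past the zeroth coordinate: you first extract from each cherry the forced identity equating $a$'s block with the far endpoint's block at that cherry's index, then close the cyclic chain $i_\alpha\geq i_\pi\geq i_\gamma\geq i_\beta\geq i_\alpha$ (each inequality coming from ``both edges of a cherry agree at all earlier blocks'') to pin all four indices to a common $i$, after which the $\alpha$- and $\beta$-cherry identities $a^{(i)}=b^{(i)}$ and $a^{(i)}\neq b^{(i)}$ collide. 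This is a somewhat more economical route, using only the coarsest part of the coloring (the first block of disagreement and the unordered pair of block values there) rather than the finer $i_k$-coordinates, and it also makes explicit the point -- which the paper's phrasing ``let $c^{(i)}=x$ and $a^{(i)}=b^{(i)}=y$'' leaves implicit -- that $a^{(i_\alpha)}=b^{(i_\alpha)}$ is forced by the equal zeroth coordinates of $\varphi(ac)$ and $\varphi(bc)$, not chosen.
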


\begin{proof}
Assume for contradiction that for distinct vertices $a,b,c,d,e$, we have $\varphi(ac)=\varphi(bc)=\alpha$, $\varphi(ab)=\varphi(bd)=\beta$,  $\varphi(ad)=\varphi(de)=\gamma$, and $\varphi(ae)=\varphi(ec)=\pi$.
 Let $\alpha_0=(i,\{x,y\})$, and let $c^{(i)}=x$ and $a^{(i)}=b^{(i)}=y$. Then $\beta_i=0$ since $a$ and $b$ agree at $i$, which implies $d^{(i)}=y$ as well. Now we conclude  $\gamma_i=0$ since $a$ and $d$ agree at $i$, which implies $e^{(i)}=y$ too. But now $\pi_i=0$ since $a$ and $e$ agree at $i$, while also $\pi_i\neq 0$ since $c$ and $e$ disagree at $i$, a contradiction.
\end{proof}

\begin{lemma}\label{lem:rainbow cycle plus matching}
Any 2-2-2-2-2 coloring of $K_5$  with a rainbow 5-cycle and at least one matching, as in Figure~\ref{fig:C}, is forbidden under $\varphi$.
\end{lemma}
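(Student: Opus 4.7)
The plan is to extend the $i$-th block analysis used in Lemmas~\ref{lem:candy corn} and~\ref{fig:1matching}. First I would exploit a simple structural fact: in any $2$-$2$-$2$-$2$-$2$ coloring of $K_5$ containing a rainbow $5$-cycle, each of the five colors appears exactly once on the cycle and once on a chord, and for a color class to be a matching its chord must be the unique chord disjoint from its cycle edge. Rotating the cycle, I may assume the rainbow Hamiltonian cycle is $a$-$b$-$c$-$d$-$e$-$a$ with matching color $\alpha = \varphi(cd) = \varphi(be)$; setting $\beta = \varphi(ab)$, $\gamma = \varphi(bc)$, $\pi = \varphi(de)$, $\theta = \varphi(ea)$, the remaining chords $\{ac, ad, bd, ce\}$ carry $\{\beta, \gamma, \pi, \theta\}$ in some bijection that I must rule out.

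Next I would write $\alpha_0 = (i, \{x, y\})$ and let $t$ be the first bit of difference between $x$ and $y$. Since $\varphi(cd) = \varphi(be) = \alpha$, the $i$-th blocks of $b, c, d, e$ all lie in $\{x, y\}$, with each of $\{c, d\}$ and $\{b, e\}$ realizing both values. After swapping the names of $x$ and $y$ if necessary, I take $c^{(i)} = x$ and $d^{(i)} = y$, which leaves two subcases: Subcase~I with $b^{(i)} = x$, $e^{(i)} = y$, and Subcase~II with $b^{(i)} = y$, $e^{(i)} = x$. Writing $w = a^{(i)}$, I can then read off $\gamma_i$ and $\pi_i$ directly from the definition of $\varphi$.

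The heart of the argument is a parity collision at coordinate $i$. In Subcase~I the chords $bd$ and $ce$ each straddle $\{x, y\}$, so any color placed on them must have $i$-coordinate $t$; on the other hand, $b$ and $c$ agree at block $i$ and $d$ and $e$ agree at block $i$, so $\gamma_i = \pi_i = 0$. Hence $\gamma, \pi$ are barred from $\{bd, ce\}$, and pigeonhole forces $\{\beta, \theta\}$ onto $\{bd, ce\}$. The resulting $\beta_i = t$ (read off the $ab$ edge) and $\theta_i = t$ (read off the $ea$ edge) then force $w_t \ne x_t$ and $w_t \ne y_t$ simultaneously, contradicting $\{x_t, y_t\} = \{0, 1\}$. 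Subcase~II is symmetric: now $bd, ce$ have matching $i$-blocks so colors there need $i$-coordinate $0$, while $\gamma_i = \pi_i = t$; once again $\{\beta, \theta\}$ is pushed onto $\{bd, ce\}$, this time demanding $w = y$ and $w = x$ at the same time.

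I expect the main obstacle to be the preliminary bookkeeping: verifying the rotational reduction to $\alpha = \varphi(cd) = \varphi(be)$, and checking that the two subcases on $(b^{(i)}, e^{(i)})$ really exhaust the possibilities. Once the set-up is fixed, both subcases are dispatched by the same ``a bit cannot differ from both $0$ and $1$'' obstruction that powers Lemma~\ref{lem:candy corn}, so no genuinely new ingredient is needed.
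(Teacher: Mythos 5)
Your proof is correct but takes a genuinely different route from the paper's. The paper argues purely combinatorially: it first uses the forbidden four-vertex pattern of Figure~\ref{fig:forb1} to force $\{\varphi(bd),\varphi(ce)\}=\{\beta,\theta\}$, and then in each remaining sub-case exhibits a copy of one of the already-forbidden five-vertex patterns from Lemma~\ref{lem:candy corn} or Lemma~\ref{fig:1matching}. You instead return to the definition of $\varphi$: your block-$i$ dichotomy shows directly that $\gamma$ and $\pi$ cannot land on $bd$ or $ce$ (their $i$-coordinates are of the wrong type, $0$ versus $t$), which pushes $\beta,\theta$ onto those chords by pigeonhole, and then extracts the familiar impossibility for $a^{(i)}$ --- two conflicting demands on bit $t$ in Subcase~I, and $a^{(i)}=x$ together with $a^{(i)}=y$ in Subcase~II. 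The paper's route is economical because it reuses the catalogue of forbidden configurations already established; yours is self-contained, avoids invoking Lemma~\ref{fig:1matching} altogether, and dispatches both orderings of $\{\beta,\theta\}$ on $\{bd,ce\}$ in one stroke rather than by separate cases. Both are valid; yours makes the block-structure obstruction more visible, at the cost of foregoing the paper's unifying theme of reducing every five-vertex configuration to a small list of primitives.
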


\begin{proof}
Assume towards a contradiction that for distinct vertices $a,b,c,d,e\in V$, we have $\varphi(be)=\varphi(cd)=\alpha$, $\varphi(ab)=\beta$,  $\varphi(bc)=\gamma$, $\varphi(de)=\pi$, and $\varphi(ea)=\theta$.
Note that since this is a 2-2-2-2-2 coloring of $K_5$, we have $\{\varphi(bd),\varphi(ce)\}=\{\beta,\theta\}$, since otherwise we would find a copy of the forbidden configuration shown in Figure~\ref{fig:forb1} using the colors $\{\alpha,\gamma\}$ or $\{\alpha,\pi\}$. This implies $\{\varphi(ac),\varphi(ad)\}=\{\pi,\gamma\}$.

Suppose first that $\varphi(bd)=\beta$ and $\varphi(ce)=\theta$.  If $\varphi(ac)=\pi$ and $\varphi(ad)=\gamma$, we find the forbidden configuration shown in Figure~\ref{fig:A} in colors $\{\alpha,\gamma,\pi\}$.  If instead $\varphi(ac)=\gamma$ and $\varphi(ad)=\pi$, then we find the forbidden configuration shown in Figure~\ref{fig:B} in colors $\{\beta,\gamma,\pi,\theta\}$. 
So, we must have $\varphi(bd)=\theta$ and $\varphi(ce)=\beta$, and we get the forbidden configuration shown in Figure~\ref{fig:A} in colors $\{\alpha,\beta,\theta\}$, a contradiction.
\end{proof}

\begin{lemma}\label{lem:lastforbidden}
The configuration with five distinct vertices $a,b,c,d,e\in V$ for which $\varphi(bc)=\varphi(de)$,  $\varphi(ad)=\varphi(cd)$, and $\varphi(ab)=\varphi(ac)$ as in Figure~\ref{fig:D}, is forbidden.
\end{lemma}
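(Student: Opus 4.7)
Following the template of the preceding lemmas, I would assume for contradiction that $\varphi(bc) = \varphi(de) = \alpha$, $\varphi(ad) = \varphi(cd) = \beta$, and $\varphi(ab) = \varphi(ac) = \gamma$, and write $\alpha_0 = (i, \{x, y\})$, so that $i$ is the first block at which $b, c$ (respectively $d, e$) disagree and $\{b^{(i)}, c^{(i)}\} = \{d^{(i)}, e^{(i)}\} = \{x, y\}$. The entire argument will be carried out at the single block index $i$, using only the $i$-th coordinates $\beta_i$ and $\gamma_i$; the vertex $e$ and the coordinates $\delta_k$ should not need to reappear.

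The first step uses $\varphi(ab) = \varphi(ac) = \gamma$ to extract a strict inequality at bit level. If $a^{(i)}$ agreed with either $b^{(i)}$ or $c^{(i)}$, then $\gamma_i$ would be forced to be both $0$ and nonzero by the two defining edges, and we would conclude $b^{(i)} = c^{(i)}$, contradicting the disagreement guaranteed by $\alpha$. So $a^{(i)}$ differs from both $b^{(i)}$ and $c^{(i)}$; letting $t := \gamma_i$, the three blocks $a^{(i)}, b^{(i)}, c^{(i)}$ all agree on their first $t - 1$ bits while $a^{(i)}_t$ disagrees with both $b^{(i)}_t$ and $c^{(i)}_t$. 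Since each bit lies in $\{0, 1\}$, this pigeonholes $b^{(i)}_t = c^{(i)}_t$, and hence the first bit index $s$ at which $b^{(i)}$ and $c^{(i)}$ disagree satisfies $s > t$. This strict inequality is the crux of the whole argument.

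Applying the same propagation to $\varphi(ad) = \varphi(cd) = \beta$ at block $i$ rules out $a^{(i)} = d^{(i)}$ and $c^{(i)} = d^{(i)}$ (either equality would, exactly as above, force $a^{(i)} = c^{(i)}$ and contradict the first step). Since $d^{(i)} \in \{x, y\} = \{b^{(i)}, c^{(i)}\}$, the only surviving possibility is $d^{(i)} = b^{(i)}$. But then $\beta_i$ must simultaneously equal the first bit of difference between $a^{(i)}$ and $b^{(i)}$, namely $t$, and the first bit of difference between $c^{(i)}$ and $b^{(i)}$, namely $s$, forcing $s = t$ and contradicting $s > t$. The main (minor) obstacle is recognising that no case analysis over $\beta_0$ or $\gamma_0$ is needed: everything localises to block $i$ once one isolates the bit-level pigeonhole $b^{(i)}_t = c^{(i)}_t$, which is what makes this lemma cleaner than Lemmas~\ref{lem:candy corn} and~\ref{fig:1matching}.
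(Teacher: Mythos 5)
Your proof is correct, and it reaches the same contradiction via the same toolkit (the structure of the Modified CFLS color at block $i$ together with a bit-level pigeonhole), but reorganized so that the paper's two-way case split disappears. The paper splits at the outset on whether $d^{(i)}$ equals $c^{(i)}$ or $b^{(i)}$, dispatches the first case with a zero/nonzero clash at $\gamma_i$, and in the second sets $t:=\beta_i$ and chases bits until $\gamma_i$ must equal both $t$ and something $>t$. You instead begin by showing $a^{(i)}\ne b^{(i)}$ and $a^{(i)}\ne c^{(i)}$ (which is effectively what the paper's first case establishes), set $t:=\gamma_i$, and pigeonhole $b^{(i)}_t=c^{(i)}_t$ to get $s>t$ once and for all; the same zero-coordinate propagation then rules out $d^{(i)}\in\{a^{(i)},c^{(i)}\}$, forcing $d^{(i)}=b^{(i)}$ and the clash $\beta_i=t$ versus $\beta_i=s$. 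The two arguments are dual --- the paper pigeonholes $a^{(i)}_t=c^{(i)}_t$ with $t=\beta_i$, you pigeonhole $b^{(i)}_t=c^{(i)}_t$ with $t=\gamma_i$ --- but factoring the strict inequality out of the casework is a genuine streamlining: it collapses the paper's two parallel cases into a single linear chain.
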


\begin{proof}
Assume towards a contradiction that for distinct vertices $a,b,c,d,e$, we have $\varphi(bc)=\varphi(de)=\alpha$, $\varphi(ad)=\varphi(cd)=\beta$, and $\varphi(ab)=\varphi(ac)=\gamma$. Let $\alpha_0=(i,\{x,y\})$. First suppose $b^{(i)}=e^{(i)}=x$ and $c^{(i)}=d^{(i)}=y$.  Then $\beta_i=0$ since $b$ and $c$ agree at $i$, and hence $a^{(i)}=y$ as well. But now $\gamma_i=0$ since $a$ and $c$ agree at $i$ while also $\gamma_i\neq 0$ since $a$ and $b$ disagree at $i$, a contradiction. So, it must be the case that $b^{(i)}=d^{(i)}=x$ and $c^{(i)}=e^{(i)}=y$.

Now $\beta_i=t$, the first index of difference between $c^{(i)}=y$ and $d^{(i)}=x$. Say $a^{(i)}=w$. Since $\varphi(ad)=\beta$, we know that $x_j=w_j$ for all $1\leq j\leq t-1$ and $x_t\neq w_t$. Then since $x_t\neq y_t$ as well, we have $w_t=y_t$. Furthermore, 
$\beta_i=t$ implies that $w_j=x_j=y_j$ for all $1\leq j\leq t-1$, so the first index of difference between $w$ and $y$ must be greater than $t$. Now consider $\gamma_i$. Since $\varphi(ac)=\gamma$, we know $\gamma_i>t$. But $\varphi(ab)=\gamma$ implies that $\gamma_i=t$, a contradiction. 
\end{proof}

\begin{lemma}\label{lem:rotated narrow paths}
The configuration with five distinct vertices $a,b,c,d,e\in V$ for which $\varphi(ab)=\varphi(be)$,  $\varphi(bc)=\varphi(ac)$, $\varphi(cd)=\varphi(bd)$, $\varphi(de)=\varphi(ce)$, and $\varphi(ae)=\varphi(ad)$, as in Figure~\ref{fig:E}, is forbidden.
\end{lemma}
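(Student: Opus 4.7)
The plan is to observe that each of the five color classes forms a cherry (path of length~2) with a well-defined \emph{center}: $\alpha$ is centered at $b$ (edges $ab, be$), $\beta$ at $c$ (edges $ac, bc$), $\gamma$ at $d$ (edges $bd, cd$), $\pi$ at $e$ (edges $ce, de$), and $\theta$ at $a$ (edges $ad, ae$). For each color $\mu \in \{\alpha,\beta,\gamma,\pi,\theta\}$ let $i_\mu$ denote the first coordinate of $\mu_0$, i.e., the block-index at which the two endpoints of any $\mu$-edge first disagree. The goal is to derive the strictly decreasing cyclic chain
\[
i_\alpha < i_\theta < i_\pi < i_\gamma < i_\beta < i_\alpha,
\]
which is immediately self-contradictory.

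The key structural observation, in the same spirit as the proofs of Lemmas~\ref{lem:candy corn} and~\ref{lem:lastforbidden}, is the following. If edges $uv$ and $vw$ share a color $\mu$, then $u, v, w$ all agree on blocks $1, \ldots, i_\mu - 1$ (since $uv$ and $vw$ each first disagree at block $i_\mu$), and at block $i_\mu$ the 2-element sets $\{u^{(i_\mu)}, v^{(i_\mu)}\}$ and $\{v^{(i_\mu)}, w^{(i_\mu)}\}$ coincide (both equal the 2-set recorded in $\mu_0$); sharing the element $v^{(i_\mu)}$, they force $u^{(i_\mu)} = w^{(i_\mu)} \neq v^{(i_\mu)}$. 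Consequently the two ``wing'' vertices $u$ and $w$ of the cherry agree on \emph{all} blocks $1, \ldots, i_\mu$, so the first block-index at which $u$ and $w$ disagree must be strictly larger than $i_\mu$.

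Applying this to each cherry of the configuration, the wings of $\alpha$ are $\{a,e\}$ and the edge $ae$ carries color $\theta$, so $i_\theta$ (the first block-diff of $a$ and $e$) exceeds $i_\alpha$. By the identical argument, the wings of $\theta$ form edge $de$ of color $\pi$, so $i_\pi > i_\theta$; the wings of $\pi$ form edge $cd$ of color $\gamma$, so $i_\gamma > i_\pi$; the wings of $\gamma$ form edge $bc$ of color $\beta$, so $i_\beta > i_\gamma$; and the wings of $\beta$ form edge $ab$ of color $\alpha$, so $i_\alpha > i_\beta$. Chaining these five inequalities closes the cycle, yielding the contradiction stated above.

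The only point requiring care is the verification that in the ``rotated narrow paths'' configuration of Figure~\ref{fig:E} the wing-pair of each cherry really is the edge of another cherry in the cycle, so that the implications chain together into a closed loop. This cyclic rotation of centers around $K_5$ is the structural feature specific to Figure~\ref{fig:E}, and it is exactly what allows this short, uniform argument; the other 2-2-2-2-2 configurations of Figures~\ref{fig:A}, \ref{fig:B}, \ref{fig:C}, \ref{fig:D} lack this symmetry and so required the individual case analyses of the preceding lemmas.
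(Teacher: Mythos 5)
Your proof is correct, but it takes a genuinely different route from the paper's. The paper fixes a single block $i=i_\alpha$, sets $a^{(i)}=e^{(i)}=x$, $b^{(i)}=y$, and then propagates agreement at that one block around part of the cycle using the \emph{within-block} coordinates: $\theta_i=0$ forces $d^{(i)}=x$, then $\pi_i=0$ forces $c^{(i)}=x$, at which point $\gamma_i$ is forced to be both zero (from $c,d$) and nonzero (from $b,d$). You instead extract the cleaner structural lemma that the two wings of a $\mu$-cherry agree on all blocks $1,\ldots,i_\mu$ (agreement on earlier blocks is inherited from the center; at block $i_\mu$ both wings must take the unique element of $\mu_0$'s 2-set other than the center's value), so the block index of the color on the wing--wing edge strictly exceeds $i_\mu$. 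Chaining this around the cyclic rotation of centers gives $i_\alpha<i_\theta<i_\pi<i_\gamma<i_\beta<i_\alpha$, an immediate contradiction. Your version is more symmetric and makes the role of Figure~\ref{fig:E}'s cyclic structure transparent, and the wing-agreement lemma is a nice abstraction of a step that also appears implicitly in the proofs of Lemmas~\ref{lem:candy corn} and~\ref{lem:lastforbidden}; the paper's version reaches a contradiction in three propagation steps rather than five and stays at the level of within-block indices, but is somewhat more ad hoc. Both arguments are valid.
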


\begin{proof}
Assume towards a contradiction that for distinct vertices $a,b,c,d,e$, we have $\varphi(ab)=\varphi(be)=\alpha$,  $\varphi(bc)=\varphi(ac)=\beta$, $\varphi(cd)=\varphi(bd)=\gamma$, $\varphi(de)=\varphi(ce)=\pi$, and $\varphi(ae)=\varphi(ad)=\theta$. 
Let $\alpha_0=(i,\{x,y\})$, and let $a^{(i)}=e^{(i)}=x$ and $b^{(i)}=y$. Then $\theta_i=0$ since $a$ and $e$ agree at $i$, which implies $d^{(i)}=x$ as well. Similarly, $\pi_i=0$ since $d$ and $e$ agree at $i$, which forces $c^{(i)}=x$. But now $\gamma_i=0$ because $c$ and $d$ agree at $i$, while $\gamma_i\neq 0$ since $b$ and $d$ disagree at $i$, a contradiction.
\end{proof}

Finally, we are ready to  show that every 2-2-2-2-2 coloring of $K_5$ is forbidden by $\varphi$. We will  do this by splitting our proof into cases based on the number of matchings and paths in a 2-2-2-2-2 configuration.

\begin{lemma}\label{lem:5paths} There is no 2-2-2-2-2 configuration in $\varphi$ in which all five color classes are paths of length 2.
\end{lemma}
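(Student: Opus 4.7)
The plan is to reduce Lemma~\ref{lem:5paths} directly to Lemma~\ref{lem:rotated narrow paths} by showing that, up to relabelling of vertices and colors, there is essentially only one 2-2-2-2-2 coloring of $K_5$ whose five color classes are all paths of length two, and that this coloring is precisely the configuration of Figure~\ref{fig:E}. Call the degree-2 vertex of each path its \emph{center}, and for each $v\in V(K_5)$ let $k(v)$ and $e(v)$ denote the number of color classes for which $v$ is the center or an endpoint, respectively. Counting the four edges incident to $v$ by color gives $2k(v)+e(v)=4$, while the number of distinct colors meeting $v$ equals $k(v)+e(v)$; hence $(k(v),e(v))\in\{(0,4),(1,2),(2,0)\}$.

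First I would pin down the type of each vertex. The case $(k,e)=(2,0)$ would mean only two colors appear at $v$, which is forbidden by Lemma~\ref{neighborhood3}, so only $(0,4)$ and $(1,2)$ can occur. Summing over the five vertices yields $\sum_v k(v)=5$, since each of the five paths has exactly one center; as only vertices with $k(v)=1$ contribute, this forces $k(v)=1$ for every $v$. Hence every vertex is the center of exactly one of the five paths, yielding a canonical bijection between the vertices and the colors.

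Next I would orient each edge of $K_5$ from the center of the path containing it toward the other endpoint of that path. The resulting digraph is a tournament on five vertices in which every vertex has out-degree $2$ and in-degree $2$ — that is, a regular tournament on five vertices. A short case check (fix the out-neighborhood of one vertex and extend) shows that up to isomorphism there is a unique such tournament, namely the cyclic one defined by $v_i\to v_{i+1}$ and $v_i\to v_{i+2}\pmod{5}$. Pulling this back through the vertex--color bijection identifies the coloring, up to relabeling, with the configuration of Figure~\ref{fig:E}, which was shown to be forbidden under $\varphi$ in Lemma~\ref{lem:rotated narrow paths}. The only mildly delicate point is the uniqueness of the regular tournament on five vertices, but this is a classical finite check; once it is in hand, the structural argument is immediate and all the substantive work has already been done in Lemma~\ref{lem:rotated narrow paths}.
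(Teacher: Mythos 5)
Your argument is correct, and it takes a genuinely different route from the paper's. The paper first shows (by a counting argument: each monochromatic path lies in two of the twelve $5$-cycles, so at least two $5$-cycles are rainbow) that the colored $K_5$ contains a rainbow $5$-cycle, and then runs a case analysis on where the second edge of color $\alpha$ lands, using Lemma~\ref{neighborhood3} and the path constraint repeatedly until both branches land on the configuration of Figure~\ref{fig:E}. Your proof instead classifies vertex types via the identity $2k(v)+e(v)=4$, rules out $(2,0)$ by Lemma~\ref{neighborhood3}, and uses $\sum_v k(v)=5$ to force $k(v)=1$ for every vertex; orienting each edge away from the center of its path then produces a $2$-out-regular tournament on five vertices, which is unique up to isomorphism (the cyclic one $QR_5$), and pulling back yields exactly Figure~\ref{fig:E}. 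Both proofs ultimately reduce to Lemma~\ref{lem:rotated narrow paths}, but your route is more structural: it identifies the unique all-paths $2$-$2$-$2$-$2$-$2$ configuration in one conceptual step rather than by branching, at the cost of importing (and verifying, by a short finite check) the uniqueness of the regular tournament on five vertices. The paper's argument is more hands-on but entirely self-contained and uses no machinery beyond the lemmas already proved. I checked the two potentially delicate points in your proposal — that there are exactly $24$ labeled $2$-out-regular tournaments on five vertices, all isomorphic to $QR_5$ (whose automorphism group has order~$5$), and that the reversal of $QR_5$ is isomorphic to $QR_5$ via $i\mapsto -i$, so the orientation convention (center-to-endpoint versus endpoint-to-center) does not matter — and both hold, so the reduction to Figure~\ref{fig:E} is sound.
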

\begin{proof}
Assume for contradiction that such a coloring of $K_5$ exists under $\varphi$. First, we will show that this $K_5$ must contain a rainbow cycle. To this end, note that each monochromatic path of length 2 is contained in two 5-cycles in our $K_5$. So if all the 5 colors are length 2 paths, there are at most 10 cycles in $K_5$ that are not rainbow. But since there are 12 distinct 5-cycles in $K_5$, there must be at least two 5-cycles containing no monochromatic path; that is, there must be some rainbow cycle.

Consider this rainbow cycle and suppose  $\varphi(ab)=\alpha$,  $\varphi(bc)=\beta$, 
$\varphi(cd)=\gamma$, $\varphi(de)=\pi$, and $\varphi(ea)=\theta$. Since the color class of $\alpha$ forms a length 2 path, 
 there are two cases up to symmetry: either 
$\varphi(be)=\alpha$, or $\varphi(bd)=\alpha$. 

First suppose $\varphi(be)=\alpha$. 
Note that $\varphi(bd)\in\{\gamma,\pi\}$, because by Lemma \ref{neighborhood3}, $b$  must be adjacent to edges of at least three colors, and we know each color class is a path. This forces $\varphi(ad)=\theta$ because  $d$ must be adjacent to edges of at least three colors by Lemma~\ref{neighborhood3}, and since $\beta$  forms a path. 
If $\varphi(bd)=\pi$, then $\{\varphi(ac), \varphi(ce)\}=\{\beta,\gamma\}$, contradicting Lemma~\ref{neighborhood3} at $c$.  So, it must be the case that $\varphi(bd)=\gamma$ and $\{\varphi(ac), \varphi(ce)\}=\{\beta,\pi\}$. Since $\pi$ forms a path, this implies $\varphi(ac)=\beta$ and $\varphi(ce)=\pi$. However, this is  the forbidden configuration in Figure~\ref{fig:E}, so we reach a contradiction.

Now instead suppose $\varphi(bd)=\alpha$. By Lemma~\ref{neighborhood3}, and because $\gamma$ must form a path, we have $\varphi(be)\in\{\pi,\theta\}$. Then $\varphi(ce)\in\{\beta,\gamma\}$ since Lemma~\ref{neighborhood3} requires at least three colors on the edges incident to $e$. This forces $\varphi(ac)=\theta$, which in turns implies $\varphi(be)=\pi$. Finally, since $\beta$ must be a path, we have $\varphi(ad)=\gamma$ and $\varphi(ce)=\beta$. But now we again have the forbidden configuration in Figure~\ref{fig:E}, a contradiction.
\end{proof}

\begin{lemma}\label{lem:1matching4paths} There is no 2-2-2-2-2 configuration in $\varphi$ in which one color classes is a matching and the rest are paths.
\end{lemma}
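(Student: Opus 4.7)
The plan is to mirror the proof of Lemma~\ref{lem:5paths}: I will first pin down the structure of the $2$-$2$-$2$-$2$-$2$ coloring via a degree count, and then in each case apply either Lemma~\ref{lem:rainbow cycle plus matching} (via a rainbow $5$-cycle) or Lemma~\ref{lem:lastforbidden} (via a copy of Figure~\ref{fig:D}). Write $M$ for the unique matching color and $P_1,\dots,P_4$ for the four path colors. The matching covers exactly four vertices; let $w$ be the uncovered vertex and $u,v,x,y$ be the others. All four edges at $w$ are path-colored, so by Lemma~\ref{neighborhood3} the multiset of colors at $w$ is $(1,1,1,1)$ (Case A: $w$ is an endpoint of all four paths) or $(2,1,1,0)$ (Case B: $w$ is the middle of exactly one path); the possibility $(2,2,0,0)$ is ruled out by Lemma~\ref{neighborhood3}.

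In Case A, every path has the form $w-m_j-a_j$ with $m_j,a_j\in\{u,v,x,y\}$. Each outer vertex has matching-degree~$1$, hence path-degree~$3$, which forces it to be the middle of exactly one path. The induced bijection $a_j\mapsto m_j$ is a fixed-point-free permutation of $\{u,v,x,y\}$; it cannot be a product of two transpositions (else two paths would share an edge), so it must be a $4$-cycle. This yields a wheel configuration: the outer $4$-cycle on $\{u,v,x,y\}$ is colored by the four path colors, each spoke from $w$ matches the color of an incident cycle edge, and $M$ is the pair of diagonals. The Hamilton cycle $u-x-v-w-y-u$ then uses one diagonal of $M$ together with all four path colors exactly once, so it is rainbow, and Lemma~\ref{lem:rainbow cycle plus matching} produces a contradiction.

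For Case B I will assume WLOG that $P_1$ has $w$ as middle with $P_1=\{wu,wv\}$, that $w$ is an endpoint of $P_2$ via $wx$ and of $P_3$ via $wy$, and that $w\notin P_4$. Since $x$ and $y$ are already middles (of $P_2$ and $P_3$), the middle of $P_4$ must lie in $\{u,v\}$; by the $u\leftrightarrow v$ symmetry I may take it to be $u$. The matching edge at $u$ is then $uv$, $ux$, or $uy$, and the $x\leftrightarrow y$ symmetry (which also swaps $P_2\leftrightarrow P_3$) collapses the last two into one, leaving two effective subcases. In the subcase $M=\{ux,vy\}$, one quickly finds that $P_4=\{uv,uy\}$, $P_2=\{wx,vx\}$, and $P_3=\{wy,xy\}$ are forced, and the Hamilton cycle $u-w-x-y-v-u$ carries colors $P_1,P_2,P_3,M,P_4$, hence is rainbow; Lemma~\ref{lem:rainbow cycle plus matching} finishes. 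In the subcase $M=\{uv,xy\}$, instead $P_4=\{ux,uy\}$, $P_2=\{wx,vx\}$, and $P_3=\{wy,vy\}$; here no rainbow $5$-cycle exists, but the configuration of Figure~\ref{fig:D} appears with $(a,b,c,d,e)=(w,u,v,x,y)$ and $(\alpha,\beta,\gamma)=(M,P_2,P_1)$, and Lemma~\ref{lem:lastforbidden} finishes.

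The bulk of the work is the routine-but-delicate case analysis. The hard part will be noticing that the final subcase (with $M=\{uv,xy\}$) does \emph{not} contain a rainbow $5$-cycle and must instead be handled by Figure~\ref{fig:D}; this is easy to miss, since every other branch of the argument closes via the rainbow $5$-cycle plus matching pattern of Lemma~\ref{lem:rainbow cycle plus matching}. Exploiting the $u\leftrightarrow v$ and $x\leftrightarrow y$ symmetries is also important to keep the sub-case tree small.
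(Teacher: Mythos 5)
Your proof is correct, but it takes a noticeably longer route than the paper's and leans on different forbidden configurations.

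In your Case A (where $w$ is an endpoint of all four paths), the paper simply notes that the resulting coloring of $K_5$ is, up to isomorphism, the unique configuration of Figure~\ref{fig:B}, which is already forbidden by Lemma~\ref{fig:1matching}. Your derivation of the wheel via the permutation argument is a nice, more explicit justification of that uniqueness claim; but having derived the wheel, the shortest finish is to observe that it \emph{is} the Figure~\ref{fig:B} configuration and invoke Lemma~\ref{fig:1matching} directly. Instead you extract a rainbow $5$-cycle and invoke Lemma~\ref{lem:rainbow cycle plus matching}. That also works, but be aware that your specific Hamilton cycle $u\text{-}x\text{-}v\text{-}w\text{-}y\text{-}u$ is rainbow only for the right choice of which $4$-cycle on $\{u,v,x,y\}$ is the rim and which direction the ``outer endpoint $\mapsto$ middle'' permutation runs; for other labelings it repeats a color and uses no diagonal. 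This is fixable with an explicit ``relabel so that $M=\{ux,vy\}$ and the paths are $w\text{-}v\text{-}u$, $w\text{-}x\text{-}v$, $w\text{-}y\text{-}x$, $w\text{-}u\text{-}y$,'' but as written the WLOG is silent and a careful reader will stumble.

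In your Case B (where $w$ is the middle of exactly one path, say $P_1$), the paper's argument is two sentences: since $P_4$ avoids $w$, both $P_4$ (a $2$-path) and the matching $M$ (a perfect matching) live entirely in the $K_4$ on $\{u,v,x,y\}$, and any such pair is precisely the forbidden $4$-vertex configuration of Figure~\ref{fig:forb1}. You instead do a two-way sub-case split on the matching edge at the middle of $P_4$, reconstruct the whole coloring, and then find either a rainbow $5$-cycle or the Figure~\ref{fig:D} configuration. This is correct --- and your observation that the $M=\{uv,xy\}$ sub-case genuinely has no rainbow $5$-cycle is a real check --- but it misses the paper's much simpler observation. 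In fact, in both of your sub-cases the pair $(M,P_4)$ is already a Figure~\ref{fig:forb1} configuration on $\{u,v,x,y\}$, so the split and the appeals to Lemmas~\ref{lem:rainbow cycle plus matching} and \ref{lem:lastforbidden} could have been avoided entirely.
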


\begin{proof}
Assume towards a contradiction that distinct vertices $a,b,c,d,e\in V$ form a 2-2-2-2-2 coloring of $K_5$ where one color class is a matching and four are paths. Let $\varphi(ab)=\varphi(cd)=\alpha$. Consider the path in color $\beta$. If $e$ is incident to two edges of color $\beta$, then by Lemma \ref{neighborhood3}, the other two edges incident to $e$ must receive distinct colors, say $\gamma$ and $\pi$. Now the fifth color $\theta$ must appear on a path in the subgraph induced by $\{a,b,c,d\}$. But this forms a forbidden configuration as in Figure~\ref{fig:forb1} in colors $\theta$ and $\alpha$, a contradiction. Since the same argument applies to $\gamma$, $\pi$, and $\theta$, we know $e$ is incident to edges of four distinct colors.
However, there is only one such configuration in which the color classes of $\beta,\gamma, \pi$, and $\theta$ are all paths, and it is the forbidden configuration shown in Figure~\ref{fig:B}, so we again reach a contradiction.
\end{proof}

\begin{lemma}\label{lem:2matchings3paths} There is no 2-2-2-2-2 configuration in $\varphi$ in which two color classes are matchings and three are paths.
\end{lemma}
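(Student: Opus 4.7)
The plan is to derive a contradiction by splitting on whether the two matching color classes $\alpha, \beta$ miss the same vertex of $K_5$ or different vertices.

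\textbf{Case 1.} Let $v$ be the vertex missed by both matchings. Then $\alpha\cup\beta$ is a 4-cycle on $V\setminus\{v\}$; WLOG $\alpha=\{ab,cd\}$ and $\beta=\{ad,bc\}$, leaving six uncolored edges: the four edges at $v$ together with the two ``diagonals'' $ac$ and $bd$. Since $ac$ and $bd$ are disjoint, no path can lie entirely outside the star at $v$, so each of $\gamma,\pi,\theta$ uses at least one edge at $v$; combined with the fact that $v$ has only four incident edges, this forces exactly one path to be centered at $v$. I then separate into two topological types for that central path (its two endpoints are either joined by a matching edge of the 4-cycle or are 4-cycle diagonals): the diagonal type leaves no way to complete one of the remaining paths without producing a matching, a contradiction. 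The surviving configuration is uniquely determined up to symmetry, and I will verify by a direct relabeling that it contains the forbidden configuration of Figure~\ref{fig:D}, contradicting Lemma~\ref{lem:lastforbidden}.

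\textbf{Case 2.} WLOG $\alpha$ misses $e$ and $\beta$ misses $a$, and $\alpha=\{ab,cd\}$. The swap $c\leftrightarrow d$ fixes $\alpha$, so among the two perfect matchings of $\{b,c,d,e\}$ edge-disjoint from $\alpha$, I may assume $\beta=\{bc,de\}$. The six remaining edges are $\{ac,ad,ae,bd,be,ce\}$, and a short enumeration shows that they admit exactly four partitions into three length-2 paths. For each of these four partitions I will exhibit a four-vertex set $S$ (always either $\{a,b,c,d\}$ or $\{b,c,d,e\}$) on which one of the matchings together with exactly one of the three paths covers all four edges of a matching-plus-path-at-a-vertex pattern, i.e.\ the forbidden configuration of Figure~\ref{fig:forb1}, already known to be forbidden under $\varphi$ by~\cite{CH1}.

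The main obstacle is the bookkeeping in Case~2: each of the four partitions requires its own one-line check, and there is no single unified argument that handles all of them simultaneously. Beyond that, the proof is essentially a careful exhaustion of configurations using the previously established forbidden patterns; no new structural idea beyond Lemmas~\ref{neighborhood3} and~\ref{lem:lastforbidden} and the forbidden 4-vertex configuration of Figure~\ref{fig:forb1} is required.
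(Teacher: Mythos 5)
Your proof is correct and follows essentially the same route as the paper: your same-vertex/different-vertex case split is exactly the paper's dichotomy of whether $\alpha\cup\beta$ forms an alternating 4-cycle or an alternating path, and both cases are closed off by the same forbidden configurations (Figure~\ref{fig:forb1} and Figure~\ref{fig:D} via Lemma~\ref{lem:lastforbidden}). The only cosmetic difference is that in the different-vertex case the paper skips your four-partition enumeration by arguing locally at the edge $bd$, whose partner in its (path) color class must be $ad$ or $be$ --- each yielding Figure~\ref{fig:forb1} --- or else that class would be a matching.
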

\begin{proof}
Assume for contradiction that distinct vertices $a,b,c,d,e\in V$ form a 2-2-2-2-2 coloring of $K_5$ with  matchings in colors $\alpha$ and $\beta$ and  paths in colors $\gamma, \pi, \theta$. If $\alpha$ and $\beta$ form an alternating path, say $\varphi(ab)=\varphi(cd)=\alpha$ and $\varphi(bc)=\varphi(de)=\beta$, then consider the edge $bd$. Without loss of generality, $\varphi(bd)=\gamma$. If $\varphi(be)=\gamma$ or $\varphi(ad)=\gamma$, then there is a copy of the forbidden configuration in Figure~\ref{fig:forb1}. So, the other edge of color $\gamma$ must be one of $ac, ce,$ or $ae$. But in these cases, $\gamma$ would form a matching rather than a path, a contradiction. 

Now suppose $\alpha$ and $\beta$ form an alternating cycle, say $\varphi(ab)=\varphi(cd)=\alpha$ and $\varphi(bc)=\varphi(ad)=\beta$. Since all remaining edges receives colors in $\{\gamma, \pi,\theta\}$, we know that the  two edges incident to $e$ must receive the same color. Without loss of generality, say $\varphi(ae)=\varphi(be)=\gamma$. Lemma \ref{neighborhood3} implies that $\pi$ and $\theta$ each appear on one of the edges $ce$ and $de$.  Furthermore, since the color classes of $\pi$ and $\theta$ are paths, we may assume that $\varphi(ac)=\varphi(ce)=\pi$ and $\varphi(bd)=\varphi(de)=\theta$. But now the colors $\alpha, \gamma$, and $\pi$ form a forbidden configuration as in Figure~\ref{fig:D}, a contradiction. 
\end{proof}

\begin{lemma}\label{lem:3matchings2paths} There is no 2-2-2-2-2 configuration in $\varphi$ in which three color classes are matchings and two are paths.
\end{lemma}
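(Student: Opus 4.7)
The plan is to argue by contradiction. Suppose $K_5$ has a 2-2-2-2-2 coloring on vertices $a,b,c,d,e$ with three matching color classes $\alpha,\beta,\gamma$ and two path color classes. Each matching consists of two disjoint edges and therefore covers exactly four of the five vertices, missing one. I will split the analysis into three cases based on the multiset of the three missed vertices.

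If all three matchings miss the same vertex, say $e$, then $\alpha,\beta,\gamma$ are three perfect matchings of the $K_4$ on $\{a,b,c,d\}$; since $K_4$ has exactly three perfect matchings, $\{\alpha,\beta,\gamma\}$ is this whole set, producing the forbidden configuration of Figure~\ref{fig:forb3}. If exactly two matchings miss the same vertex, say $\alpha$ and $\beta$ miss $e$ while $\gamma$ misses a different vertex, then $\alpha$ and $\beta$ are two distinct perfect matchings of the $K_4$ on $\{a,b,c,d\}$, whose union is a $4$-cycle, realizing the forbidden two-matching pattern of Figure~\ref{fig:forb1} on this $K_4$.

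The remaining case is when the three matchings miss three distinct vertices; WLOG $\alpha,\beta,\gamma$ miss $a,b,c$ respectively, so $d$ and $e$ lie in every matching. I would first show that $de$ must be a matching edge: otherwise, in each matching $d$ is matched with some vertex in $\{a,b,c\}$ (and likewise $e$), which forces $\alpha\cup\beta\cup\gamma$ to be the bipartite graph $K_{3,2}$ between $\{a,b,c\}$ and $\{d,e\}$; the four remaining edges are then $\{ab,ac,bc,de\}$, and since $de$ shares no vertex with any of $ab,ac,bc$, no partition of these four edges into two $2$-edge paths exists, contradicting the assumption that the two path color classes cover exactly these edges. Hence WLOG $\alpha=\{bc,de\}$, and the requirement that $\beta,\gamma$ be perfect matchings of the appropriate $K_4$s avoiding $de$ and pairwise disjoint leaves only two sub-cases up to swapping $\beta$ and $\gamma$: either $(\beta,\gamma)=(\{ad,ce\},\{ae,bd\})$ or $(\{ae,cd\},\{ad,be\})$. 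In each sub-case, a direct check shows the six edges of $\alpha\cup\beta\cup\gamma$ form a triangle on $\{a,d,e\}$ together with a length-three path from $d$ to $e$ through the remaining two vertices, with the three matching colors appearing in an alternating pattern. A suitable relabeling of $\{a,b,c,d,e\}$ then identifies this with the candy-corn configuration of Figure~\ref{fig:A} realized by the color classes $\alpha,\beta,\gamma$, so Lemma~\ref{lem:candy corn} provides the contradiction. The main obstacle is this last case, which requires both ruling out the $K_{3,2}$ possibility for $\alpha\cup\beta\cup\gamma$ via the path constraint and exhibiting the explicit relabeling that identifies the matching union with Figure~\ref{fig:A}.
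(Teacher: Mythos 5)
Your decomposition is genuinely different from the paper's: you classify by which vertices the three matchings miss, whereas the paper fixes a monochromatic path $ab,bc$ and branches on whether the closing edge $ac$ carries a path color or a matching color. Cases 1 and 3 of your classification are correct. If all three matchings miss the same vertex $e$, they are the three perfect matchings of the $K_4$ on $\{a,b,c,d\}$, giving Figure~\ref{fig:forb3}. In the all-distinct case, your argument that $de$ must lie in some matching (else $\alpha\cup\beta\cup\gamma$ would be $K_{3,2}$ and the leftover edges $\{ab,ac,bc,de\}$ cannot split into two length-$2$ paths) is sound, and the forced configuration $\alpha=\{bc,de\}$, $\beta=\{ad,ce\}$, $\gamma=\{ae,bd\}$ (up to swapping $d,e$) does relabel to Figure~\ref{fig:A} via $a'=a$, $b'=d$, $c'=b$, $d'=c$, $e'=e$, so Lemma~\ref{lem:candy corn} finishes.

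Case 2, however, has a genuine gap. You claim that the union of the two matchings $\alpha,\beta$ on $\{a,b,c,d\}$, an alternating two-colored $4$-cycle, is ``the forbidden two-matching pattern of Figure~\ref{fig:forb1}.'' Figure~\ref{fig:forb1} is not a two-matching pattern: it is a monochromatic matching of size two together with a monochromatic length-$2$ path in a second color. An alternating two-colored $C_4$ is not among the forbidden four-vertex configurations, and indeed $\varphi$ does admit $\varphi(ab)=\varphi(cd)$, $\varphi(ac)=\varphi(bd)$ with these colors distinct; the remaining edges $ad,bc$ of that $K_4$ simply carry further colors, so even the $(4,3)$ property is not violated. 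So no contradiction follows from $\alpha$ and $\beta$ alone.

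The conclusion of Case 2 is nonetheless true, but you need the rest of the coloring. Once $\alpha=\{ab,cd\}$ and $\beta=\{ac,bd\}$ are fixed, disjointness forces $\gamma$ up to which vertex of $\{a,b,c,d\}$ it misses (and those four choices are equivalent under the symmetry stabilizing $\{\alpha,\beta\}$); say $\gamma$ misses $d$, giving $\gamma=\{ae,bc\}$. The four remaining edges $ad,be,ce,de$ then admit only one partition into two length-$2$ paths, namely $\{ad,de\}$ and $\{be,ce\}$. In this fully determined coloring, the matching $\gamma=\{ae,bc\}$ and the path $\{be,ce\}$ together realize Figure~\ref{fig:forb1} on $\{a,b,c,e\}$ (take $a'=e$, $d'=a$, $\{b',c'\}=\{b,c\}$). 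That is the argument you actually need; the step as written does not go through.
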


\begin{proof}
Assume for contradiction that distinct vertices $a,b,c,d,e\in V$ form a 2-2-2-2-2 coloring of $K_5$ with three color classes that are matchings and two color classes that are paths. Let $\varphi(ab)=\varphi(bc)=\alpha$ and $\varphi(ac)=\beta$. Then either $\beta$ is a path, or a matching.

First suppose $\beta$ forms a path. Without loss of generality, $\varphi(cd)=\beta$. Then $ad$ receives some third color, say $\varphi(ad)=\gamma$, and $\gamma$ forms a matching. We cannot have $\varphi(ec)=\gamma$ without forming a forbidden configuration as in Figure~\ref{fig:forb1} with colors $\{\beta, \gamma\}$, so $\varphi(be)=\gamma$ is forced. Now the remaining three edges incident to $e$  should be colored with the two colors $\pi$ and $\theta$, contradicting the fact that $\pi$ and $\theta$ both form a matching each.

Therefore, $\beta$ must form a matching. Since we must avoid the forbidden configuration in Figure~\ref{fig:forb1}, the other edge colored $\beta$ cannot be incident to $b$, which implies  $\varphi(de)=\beta$. Consider where the second monochromatic path, say in color $\gamma$, can occur. If $b$ is not incident to an edge of color $\gamma$, then we find a forbidden configuration as in Figure~\ref{fig:forb1} in colors $\{\beta,\gamma\}$. So, $b$ must be incident to an edge of color $\gamma$, say $\varphi(bd)=\gamma$. By Lemma~\ref{neighborhood3}, $\varphi(be)\neq \gamma$, so either $cd$ or $ad$ also receive color $\gamma$. 

In both cases, there are three remaining edges incident to $e$ which must be colored with $\{\pi,\theta\}$, but  both $\pi$ and $\theta$ form matchings in the $K_5$, a contradiction. 
\end{proof}

\begin{lemma}\label{lem:4matchings1path} There is no 2-2-2-2-2 configuration in $\varphi$ in which four color classes are matchings and one is a path.
\end{lemma}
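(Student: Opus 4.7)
The plan is to assume for contradiction that such a $2$-$2$-$2$-$2$-$2$ coloring exists, with the unique path color $\alpha$ satisfying $\varphi(ab)=\varphi(bc)=\alpha$ and four matching colors $\beta,\gamma,\pi,\theta$. A first observation is that neither $d$ nor $e$ is incident to an $\alpha$-edge, so each has all four incident edges in $\{\beta,\gamma,\pi,\theta\}$. Because each matching color appears at most once at any vertex, and Lemma~\ref{neighborhood3} forces at least three distinct colors at every vertex, $d$ and $e$ must each see all four matching colors, exactly once apiece.

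Next I would pin down $\varphi(ac)$. Let $\varphi(de)=\beta$; the other $\beta$-edge is disjoint from $de$, so its endpoints lie in $\{a,b,c\}$. Since $ab$ and $bc$ are colored $\alpha$, we conclude $\varphi(ac)=\beta$. The remaining six edges between $\{a,b,c\}$ and $\{d,e\}$ form the bipartite graph $K_{3,2}$, which must be properly $3$-edge-colored by $\gamma,\pi,\theta$ so that every color class is a perfect matching of two disjoint edges. Such colorings correspond to derangements of $\{a,b,c\}$, of which there are exactly two; moreover the swap $a\leftrightarrow c$, which fixes the path $\alpha$, identifies the two (possibly after relabeling two of the matching colors), so only one representative need be analyzed.

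In this representative, I expect that the three-color subgraph on $\beta$ together with two carefully chosen colors from $\{\gamma,\pi,\theta\}$ is isomorphic to the configuration of Figure~\ref{fig:A}. The idea is that $d$ and $e$ will serve as the two degree-$3$ hubs, joined by the edge $de\in\beta$, while the remaining four edges of the triple distribute themselves exactly as in Figure~\ref{fig:A}. Producing an explicit vertex bijection and checking that the three color classes correspond then exhibits the forbidden pattern, and Lemma~\ref{lem:candy corn} yields the contradiction.

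The main obstacle is choosing the right triple of matching colors. Of the four such triples, the one omitting $\beta$ gives precisely a $K_{3,2}$ on $\{a,b,c\}\times\{d,e\}$ (so $de$ is absent and Figure~\ref{fig:A} cannot embed), two others yield three vertices of degree three (incompatible with Figure~\ref{fig:A}), and only one realizes the Figure~\ref{fig:A} pattern. Once this triple is identified, writing down the vertex bijection and verifying the six color correspondences is routine bookkeeping. If the Figure~\ref{fig:A} embedding should fail in some subcase, Lemma~\ref{lem:lastforbidden} (Figure~\ref{fig:D}) applied to an alternate triple of colors ought to do the job after analogous casework.
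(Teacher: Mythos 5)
Your proposal is correct, and at the decisive step it is actually more careful than the paper's own proof. Both arguments share the same frame: the path $\alpha$ sits on $a$--$b$--$c$, the colour of $de$ is forced to pair with $ac$ (you get this directly because the partner of $de$ must have both endpoints in $\{a,b,c\}$; the paper instead starts from $\varphi(ac)=\beta$ and uses Figure~\ref{fig:forb1} to force $\varphi(de)=\beta$), and what remains is the $K_{2,3}$ between $\{a,b,c\}$ and $\{d,e\}$ split into three $2$-matchings. The paper then dismisses this situation with the one-line claim that such a splitting cannot occur; read as a purely combinatorial statement that is not right, since the splittings do exist and, exactly as you say, correspond to the two derangements of $\{a,b,c\}$, which are interchanged by the swap $a\leftrightarrow c$ (note this swap fixes both $\alpha$ and $\beta$, so your reduction to one representative is legitimate). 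Hence the extra step you supply -- eliminating the surviving representative via Lemma~\ref{lem:candy corn} -- is genuinely needed, and it does go through: taking the representative $\gamma=\{ad,be\}$, $\pi=\{bd,ce\}$, $\theta=\{cd,ae\}$, the triple $\beta,\gamma,\pi$ realizes Figure~\ref{fig:A} under the relabelling $a'=b$, $b'=d$, $c'=a$, $d'=c$, $e'=e$, since $\varphi(b'e')=\varphi(de)=\beta=\varphi(ac)=\varphi(c'd')$, $\varphi(a'b')=\varphi(bd)=\pi=\varphi(ce)=\varphi(d'e')$, and $\varphi(b'c')=\varphi(ad)=\gamma=\varphi(be)=\varphi(a'e')$. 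So you can replace the hedged ``I expect'' and the fallback to Lemma~\ref{lem:lastforbidden} by this explicit check, and the proof is complete. One small simplification: you do not need Lemma~\ref{neighborhood3} to see that $d$ and $e$ meet all four matching colours; since every colour other than $\alpha$ is a matching, no colour can repeat at a vertex of degree $4$ in those colours, which already forces all four to appear at each of $d$ and $e$.
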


\begin{proof}
 Assume towards a contradiction that distinct vertices $a,b,c,d,e\in V$ form a 2-2-2-2-2 coloring of $K_5$ with a path in color $\alpha$ and four matchings.  Let $\varphi(ab)=\varphi(bc)=\alpha$, and say $\varphi(bc)=\beta$. Then $\beta$ cannot appear on the edges $bd$ or $be$, otherwise there is a copy of the forbidden configuration in Figure~\ref{fig:forb1}. So, since $\beta$ forms a matching, we must have $\varphi(de)=\beta$. Now the remaining six edges in this $K_5$ form a copy of $K_{2,3}$ on vertex set $\{a,b,c\}\cup \{d,e\}$ which must be colored with three colors, but this cannot occur when all three color classes are matchings of size two. 
\end{proof}

\begin{lemma}\label{lem:5matchings} There is no 2-2-2-2-2 configuration in $\varphi$ in which all five color classes are matchings.
\end{lemma}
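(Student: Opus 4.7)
The plan is to show that, up to permutation of the five vertex labels, any 5-matching 2-2-2-2-2 configuration of $K_5$ must contain three color classes that realize the forbidden pattern of Figure~\ref{fig:A}; Lemma~\ref{lem:candy corn} then yields a contradiction.

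Since each color class is a 2-edge matching on five vertices, it covers exactly four vertices and misses the fifth, so the five colors miss five distinct vertices and give a canonical bijection between colors and vertices. I would label the vertices $a, b, c, d, e$ and the colors $\alpha, \beta, \gamma, \pi, \theta$ so that $\alpha$ misses $a$, $\beta$ misses $b$, and so on. Permuting labels within $\{b, c, d, e\}$, I may assume $\alpha = \{bc, de\}$. Then $\beta$, a matching on $\{a, c, d, e\}$ avoiding $de$, is either $\{ad, ce\}$ or $\{ae, cd\}$; swapping the names $d$ and $e$ (and renaming $\pi, \theta$ accordingly) lets me assume $\beta = \{ad, ce\}$. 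The matching $\gamma$ on $\{a, b, d, e\}$ must then avoid both $de$ (used by $\alpha$) and $ad$ (used by $\beta$), forcing $\gamma = \{ae, bd\}$.

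To finish, I would apply the vertex permutation $(a, b, c, d, e) \mapsto (a, c, d, b, e)$. Under this permutation the triple $(\alpha, \beta, \gamma) = (\{bc, de\}, \{ad, ce\}, \{ae, bd\})$ becomes $(\{cd, be\}, \{ab, de\}, \{ae, bc\})$, which is exactly the pattern of three color classes displayed in Figure~\ref{fig:A}. Lemma~\ref{lem:candy corn} rules this pattern out under $\varphi$, contradicting our initial assumption.

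The main (and only) nontrivial obstacle is recognizing that the forced configuration of $\alpha, \beta, \gamma$ is precisely the one forbidden by Lemma~\ref{lem:candy corn}, modulo an additional relabeling. Once that is spotted, the argument reduces to a mechanical symmetry analysis: every vertex permutation is a legitimate ``without loss of generality'' move, and the final check that the triple matches Figure~\ref{fig:A} is a short direct computation. An alternative route would be to exhibit a rainbow 5-cycle in the forced configuration (for instance $a\text{--}b\text{--}d\text{--}e\text{--}c\text{--}a$ with colors $\theta, \gamma, \alpha, \beta, \pi$) and invoke Lemma~\ref{lem:rainbow cycle plus matching} instead, but the Figure~\ref{fig:A} route is the most direct.
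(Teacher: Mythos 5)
Your proof is correct, and it is organized somewhat differently from the paper's. The paper proceeds by a short case analysis (taking $\alpha = \{ab, cd\}$ and $bc \in \beta$, then branching on whether $\beta$'s second edge is $ad$ or $de$): in the first branch it closes with a degree/pigeonhole argument at $e$, and in the second branch it identifies the configuration as Figure~\ref{fig:A}. Your approach front-loads the structural observation that in an all-matchings configuration each vertex is missed by exactly one color class, which gives a bijection between colors and vertices; with this coordinate system in place, the choice of $\gamma$ is forced once $\alpha$ and $\beta$ are fixed up to symmetry, and there is genuinely only one case to check. What the bijection buys you is that the paper's first branch (where both $\alpha$ and $\beta$ miss $e$) simply never arises, so the pigeonhole step disappears; what the paper's route buys is that it does not need to justify the bijection separately. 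Both routes end at the same place, namely Lemma~\ref{lem:candy corn}.

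One small gap worth filling: you assert that ``the five colors miss five distinct vertices'' directly after noting each color misses one vertex, but distinctness needs its own (one-line) argument. Each vertex $v$ has degree $4$, and each of the five matchings contributes at most one edge at $v$; if two colors both missed $v$, only three matchings could cover $v$'s four incident edges, which is impossible. Thus every vertex is missed by exactly one color, and the bijection is established. With that filled in, the relabeling computation is correct and the argument is complete, and your alternative finish via the rainbow $5$-cycle $a\text{--}b\text{--}d\text{--}e\text{--}c\text{--}a$ and Lemma~\ref{lem:rainbow cycle plus matching} also checks out.
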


\begin{proof}
Assume towards a contradiction that distinct vertices $a,b,c,d,e\in V$ form a 2-2-2-2-2 coloring of $K_5$ where each color class is a matching. Without loss of generality, assume $\varphi(ab)=\varphi(cd)=\alpha$ and $\varphi(bc)=\beta$. Then up to symmetry, there are two cases: either $\varphi(ad)=\beta$ or $\varphi(de)=\beta$. 

Consider the case $\varphi(ad)=\beta$. Let $\varphi(ae)=\gamma$. Then since $\gamma$ is a matching, we must have $\varphi(bd)=\gamma$ as well. But then the remaining two colors cannot both form matchings, since two of the edges $be, ce,$ and $de$ must receive the same color by pigeon-hole principle. 

Thus, we may assume $\varphi(de)=\beta$ instead. Let $\varphi(ae)=\gamma$. Then since $\gamma$ must form a matching, we have $\varphi(bd)=\gamma$ as well. However, this forms the forbidden configuration shown in Figure~\ref{fig:A}, a contradiction. 
\end{proof}

Combining the results of Lemmas~\ref{lem:5paths}-\ref{lem:5matchings} shows that $\varphi$ forbids all 2-2-2-2-2 colorings of $K_5$. Therefore, any $K_5$ which appears under $\varphi$ must contain some color an odd number of times. 
\qed

\section{Forbidden submatching method}\label{sec:blackbox}

We will use the simplified version presented in~\cite{JM} of the conflict-free hypergraph matching theorem from~\cite{GJKKL} to prove Theorem~\ref{thm:C4inKnn}. In order to state this theorem, we will need to introduce some terminology and notation. 

Given a hypergraph $\HH$ and a vertex $v\in V(\HH)$, its \emph{degree} $\deg_{\HH}(v)$ 
is the number of edges in $\HH$ containing $v$. The maximum degree and minimum degree of $\HH$ are denoted by $\Delta(\HH)$ and $\delta(\HH)$, respectively. For $j\geq 2$, $\Delta_j(\HH)$ denotes the maximum number of edges in $\HH$ which contain a particular set of $j$ vertices, over all such sets. 

In addition, for a (not necessarily uniform) hypergraph $\CC$ and an integer $k$, let $\CC^{(k)}$ be the set of edges in $\CC$ of size $k$. For a vertex $u\in V(\CC)$, let $\CC_u$ denote the hypergraph $\{C\backslash \{u\} \mid C\in E(\CC), u\in C\}$.

Given a hypergraph $\HH$, a hypergraph $\CC$ is a \emph{conflict system} for $\HH$ if $V(\CC)=E(\HH)$. A set of edges $E \subset \HH$ is \emph{$\CC$-free} if $E$ contains no subset $C\in \CC$. Given integers $d\geq 1$, $\ell\geq 3$, and $\eps\in(0,1)$, we say $\CC$ is \emph{$(d,\ell,\eps)$-bounded} if  $\CC$ satisfies the following conditions:
\begin{enumerate}
    \item[(C1)] $3\leq |C|\leq \ell$ for all $C\in\CC$;
    \item[(C2)] $\Delta(\CC^{(j)})\leq \ell d^{j-1}$ for all $3\leq j\leq \ell$;
    \item[(C3)] $\Delta_{j'}(\CC^{(j)})\leq d^{j-j'-\eps}$ for all $2\leq j'< j\leq \ell$. 
\end{enumerate}

Finally, given a $(d,\ell,\eps)$-bounded conflict system $\CC$ for a hypergraph $\HH$, we will define a type of weight function which can be used to guarantee that the almost-perfect matching given by Theorem~\ref{thm:blackbox} below satisfies certain quasirandom properties. We say a function $w:\binom{\HH}{j}\rightarrow[0,\ell]$ for $j\in\naturals$ is a \emph{test function} for $\HH$ if $w(E)=0$ whenever $E\in\binom{\HH}{j}$ is not a matching, and we say $w$ is \emph{$j$-uniform}. For a function $w:A\rightarrow \reals$ and a finite set $X\subset A$, let $w(X):=\sum_{x\in X} w(x)$. If $w$ is a $j$-uniform test function, then for each $E\subset \HH$, let $w(E)=w(\binom{E}{j})$. Given $j,d\in\naturals$, $\eps>0$, and a conflict system $\CC$ for hypergraph $\HH$, we say a $j$-uniform test function $w$ for $\HH$ is \emph{$(d,\eps,\CC)$-trackable} if $w$ satisfies the following conditions:
\begin{enumerate}
    \item[(W1)] $w(\HH)\geq d^{j+\eps}$;
    \item[(W2)] $w(\{{E\in \binom{\HH}{j}:E\supseteq E'\})\leq w(\HH})/d^{j'+\eps}$ for all $j'\in[j-1]$ and $E'\in\binom{\HH}{j'}$;
    \item[(W3)] $|(\CC_e)^{(j')}\cap(\CC_f)^{(j')}|\leq d^{j'-\eps}$ for all $e,f\in \HH$ with $w(\{E\in\binom{\HH}{j}:e,f\in E\})>0$ and all $j'\in[\ell-1]$;
    \item[(W4)] $w(E)=0$ for all $E\in\binom{\HH}{j}$ that are not $\CC$-free.
\end{enumerate}

\begin{theorem}[\cite{GJKKL}, Theorem 3.3]\label{thm:blackbox}
For all $k,\ell\geq 2$, there exists $\eps_0>0$ such that for all $\eps\in(0,\eps_0)$, there exists $d_0$ such that the following holds for all $d\geq d_0$. Suppose $\HH$ is a $k$-regular hypergraph on $n\leq \exp(d^{\eps^3})$ vertices with $(1-d^{-\eps})d\leq \delta(\HH)\leq \Delta(\HH)\leq d$ and $\Delta_2(\HH)\leq d^{1-\eps}$. Suppose $\CC$ is a $(d,\ell,\eps)$-bounded conflict system for $\HH$, and suppose $\WW$ is a set of $(d,\eps, \CC)$-trackable test functions for $\HH$ of uniformity at most $\ell$ with $|\WW|\leq \exp(d^{\eps^3})$. Then, there exists a $\CC$-free matching $\MM\subset \HH$ of size at least $(1-d^{-\eps^3})n/k$ with $w(\MM)=(1\pm d^{-\eps^3})d^{-j}w(\HH))$ for all $j$-uniform $w\in \WW$. 
\end{theorem}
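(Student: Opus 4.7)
The plan is to prove Theorem~\ref{thm:blackbox} by the semirandom (``R\"odl nibble'') method, iterated over $T \asymp p^{-1}\log d$ rounds with nibble probability $p \asymp d^{-1}$. At each time step $t$ I maintain a ``surviving'' subhypergraph $\HH_t \subseteq \HH$ consisting of edges that are not yet in the partial matching $\MM_t$, do not intersect an already-chosen edge, and do not complete any $C\in\CC$ together with previously chosen edges; I also maintain the restricted conflict system $\CC_t$ and the values $w(\HH_t)$ for each $w\in \WW$. The target is to show that these quantities evolve according to the heuristic differential equations: $|\HH_t|$ contracts by a factor $\approx e^{-kp}$ per round, while each $w(\HH_t)$ contracts by $\approx e^{-jp}$. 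Iterating until $|\HH_T|\le d^{-\eps^3} n/k$ leaves a matching $\MM$ of the required size, and tracking the weights throughout gives $w(\MM) = (1\pm d^{-\eps^3})d^{-j} w(\HH)$.

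A single round proceeds by (i) forming a trial set $M_t'\subseteq \HH_t$ by including each edge independently with probability $p$; (ii) discarding trial edges that share a vertex with another trial edge; (iii) discarding trial edges whose addition to $\MM_t$ would complete a conflict; and (iv) updating $\HH_{t+1}$ and $\CC_{t+1}$ accordingly. The analysis requires tight one-step concentration of three families of statistics: vertex degrees in $\HH_t$, two-point codegrees, and each weight $w(\HH_t)$. Each is a function of the independent coordinates $(\mathbb{1}[e\in M_t'])_{e\in \HH_t}$ with bounded per-coordinate Lipschitz influence, so Talagrand (or the bounded-differences inequality) yields concentration around the expectation, provided $\Delta_2(\HH_t)\le d^{1-\eps}$ and (C3) keep the contribution of any single trial edge small.

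The principal obstacle is controlling the conflict system through the iteration: each round creates new ``dangerous'' conflicts (those already containing a chosen edge), and the evolved $\CC_t$ must stay $(d,\ell,\eps)$-bounded for the next round's analysis to close. Conditions (C2) and (C3) make this manageable, because the expected number of edges removed due to completing a conflict $C\in\CC^{(j)}$ is at most $O(\ell d^{j-1} p^{j-1})\cdot |\HH_t|$, which is a factor $d^{-\eps}$ smaller than the $O(kp)|\HH_t|$ edges removed by vertex intersection; thus conflicts are only a lower-order perturbation of the vanilla nibble. For the weighted counts, (W3) is exactly what is needed to bound the variance: for any pair $e,f$ in the support of a test function, the conflict closures $\CC_e$ and $\CC_f$ interact on a set of size $\le d^{j'-\eps}$, which, combined with (W2) and (W4), makes the covariance contribution lower-order and allows the Talagrand bound to go through uniformly over all $\le \exp(d^{\eps^3})$ functions in $\WW$ by a union bound. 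Packaging these steps into a single round-by-round pseudo-randomness preservation lemma and iterating geometrically yields the stated bounds on $|\MM|$ and on each $w(\MM)$.
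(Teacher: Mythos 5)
This statement is not proved in the paper at all: it is quoted verbatim as Theorem 3.3 of Glock, Joos, Kim, K\"uhn, and Lichev~\cite{GJKKL} and used purely as a black box, so there is no internal proof to compare your argument with. Judged on its own, your proposal correctly identifies the general strategy behind such results (a semirandom/random-greedy matching process whose key statistics are tracked by concentration inequalities), but as written it is an outline that asserts precisely the steps that constitute the actual technical content of~\cite{GJKKL}, and two of those assertions are not justified as stated.

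First, the claim that conflict-caused removals are ``a factor $d^{-\eps}$ smaller'' than vertex-collision removals, deduced from (C2), does not hold up: (C2) carries no $\eps$-saving, and a first-moment count shows the expected number of conflicts containing a fixed edge $e$ whose other $j-1\ge 2$ edges end up in the matching is of order $\ell d^{j-1}\cdot\Theta(d^{-(j-1)})=\Theta(\ell)$, i.e.\ constant, not $o(1)$. The reason only a $d^{-\eps^3}$ fraction of edges is lost to conflicts is a timing/tracking argument: one must introduce, as separate tracked random variables, the numbers of partially completed conflicts containing each surviving edge (this is exactly what (C3) and the trackability conditions are designed to control), and show these stay subcritical throughout; your sketch skips this, which is the heart of the theorem. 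Relatedly, your parameters are inconsistent (with $p\asymp d^{-1}$ each vertex is covered with constant probability per round, so the process runs for $\Theta(\log d)$ rounds, not $\asymp d\log d$), and you never address how per-round errors accumulate to a final \emph{relative} error as small as $d^{-\eps^3}$, which is far more delicate than ``Talagrand gives concentration.'' Second, tracking $w(\HH_t)$ does not give the conclusion $w(\MM)=(1\pm d^{-\eps^3})d^{-j}w(\HH)$: $w(\MM)$ counts $j$-subsets of the final matching, so one must track, for every partial subset $E'\subseteq\MM_t$ with $|E'|=j'<j$, the total weight of its extensions among surviving edges (this is what (W2) and (W3) are for), and prove concentration for all of these auxiliary quantities uniformly over the $\exp(d^{\eps^3})$ test functions. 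Without these ingredients the proposal is a plausible plan, not a proof; if you need this statement, cite~\cite{GJKKL} as the paper does.
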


We will say that a hypergraph $\HH$ with $(1-d^{-\eps})d\leq \delta(\HH)\leq \Delta(\HH)\leq d$  is \emph{almost $d$-regular.}

\section{Proof of Theorem~\ref{thm:C4inKnn}}\label{sec:C4inKnn}

First, observe that $g(K_{n,n},C_4)>n/2$. Indeed, let $V(K_{n,n})=X\cup Y$ and suppose the edges of $K_{n,n}$ are colored with at most $n/2$ colors. Then for each vertex $x\in X$, there are at least $n/2$ pairs $u,v\in Y$ such that $xu$ and $xv$ receive the same color. Since there are $\binom{n}{2}$ pairs of vertices in $Y$, the pigeonhole principle implies that there must be two vertices $x,z\in X$ such that $xu$ and $xv$ share a color and $zu$ and $zv$ share a color, giving a copy of $C_4$ with no color appearing an odd number of times. 

From now on, we remove the ceiling function notation in order to make the proof easier to read. However, whenever $n$ is odd, when we write $\frac{n}{2}$ we mean $\lceil \frac{n}{2} \rceil$. 

In order to prove the upper bound in Theorem~\ref{thm:C4inKnn}, 
we will construct a coloring of $K_{n,n}$ in two stages. The coloring in the first stage will use $k=n/2$ colors to color a majority of the edges of $K_{n,n}$. To determine this coloring, we define appropriate hypergraphs $\HH$ and $\CC$ for which a $\CC$-free matching in $\HH$ corresponds to a partial coloring of $K_{n,n}$ with monochromatic copies of $C_6$. Each copy of $C_4$ in the resulting coloring sees some color an odd number of times. 

Theorem~\ref{thm:coloringproperties} below guarantees that our choices of $\HH$ and $\CC$ satisfy the requirements for applying the forbidding submatching method to find the desired matching. In the second stage of the coloring, we then randomly color the remaining uncolored edges using a new set of $o(n)$ colors. 

\begin{theorem}\label{thm:coloringproperties}
There exists $\delta>0$ such that for all sufficiently large $n$ in terms of $\delta$, there is an edge-coloring of a subgraph $F\subset K_{n,n}$ with at most $n/2$ colors and the following properties:
\begin{enumerate}
    \item Every color class is a union of vertex-disjoint copies of $C_6$.
    \item Every copy of $C_4$ in $F$ intersects some color class in an odd number of edges.
    \item The graph $L=K_{n,n}\setminus E(F)$ has maximum degree at most $n^{1-\delta}$.
    \item Let $V(K_{n,n})=X\cup Y$. For each $(x,y) \in X \times Y$, the number of edges $x'y'\in E(L)$ with $x'\in X \backslash\{x\}$, $y'\in Y \backslash\{y\}$ such that $xy'$ and $yx'$ receive the same color is at most $4n^{1-\delta}$.
\end{enumerate}
\end{theorem}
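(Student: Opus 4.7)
The plan is to build the coloring via the conflict-free hypergraph matching method (Theorem~\ref{thm:blackbox}). With $k=n/2$ colors, let $\Pi$ denote the collection of unordered same-side pairs $\{u,w\}$ with $u,w\in X$ or $u,w\in Y$. I would introduce an auxiliary $18$-uniform hypergraph $\HH$ whose vertex set is $E(K_{n,n})\cup(V(K_{n,n})\times[k])\cup\Pi$ and whose hyperedges, indexed by pairs $(H,c)$ with $H$ a $6$-cycle in $K_{n,n}$ and $c\in[k]$, consist of the six edges of $H$, the six vertex-color pairs $(v,c)$ with $v\in V(H)$, and the six same-side pair-vertices $\{u,w\}\in\Pi$ with $u,w\in V(H)$. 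A direct counting argument shows that all three types of vertices of $\HH$ have degree $(1\pm O(1/n))n(n-1)^2(n-2)^2/2$ and $\Delta_2(\HH)=O(n^4)$, so $\HH$ is almost $d$-regular with $d\sim n^5/2$ in the sense of Theorem~\ref{thm:blackbox} whenever $\eps<1/5$.

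The point of the $\Pi$-vertices is to force any two monochromatic $P_3$'s with the same same-side endpoint pair to come from the same $C_6$ (in particular, the same color), which then rules out ``adjacent $2{+}2$'' bad $C_4$'s automatically via the matching constraint. Every bad $C_4$ -- one in which no color appears an odd number of times -- has its four edges colored either monochromatically (impossible, since $C_6$ has girth $6$) or in a $(2{+}2)$ pattern, which splits into an ``adjacent'' subcase and a ``non-adjacent'' subcase. The adjacent subcase, together with the non-adjacent subcase in which each color uses a single $C_6$ containing both of its edges, is automatically forbidden by $\Pi$: in each of these subcases the two relevant $C_6$'s both contain a $P_3$ with endpoints $\{v_1,v_3\}$. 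The remaining configurations -- non-adjacent $(2{+}2)$ $C_4$'s in which one color's two edges lie in two distinct vertex-disjoint $C_6$'s (size-$3$ conflicts) or both colors' pairs do (size-$4$ conflicts) -- are collected into a conflict system $\CC$. A routine count of $C_6$'s through prescribed sets of edges or vertices then shows that $\CC$ is $(d,\ell,\eps)$-bounded for a sufficiently large absolute constant $\ell$ and any $\eps<1/5$.

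Next I would equip $\HH$ with trackable test functions. For property~$3$, attach to each $v\in V(K_{n,n})$ the $1$-uniform test function $w_v(H,c):=\mathbf{1}[v\in V(H)]$ with $w_v(\HH)\sim n^6/4$; Theorem~\ref{thm:blackbox} gives $w_v(\MM)=(1\pm d^{-\eps^3})n/2$, so the number of edges at $v$ covered by the partial coloring is $(1\pm n^{-\delta})n$ for $\delta=\Theta(\eps^3)$, and hence $\deg_L(v)\le n^{1-\delta}$. For property~$4$, for each $(x,y)\in X\times Y$ I would track separately the count $A(x,y)$ of pairs $(x',y')$ with $c(xy')=c(yx')$ (ignoring whether $x'y'\in F$) and the count $B(x,y)$ of those pairs with the additional condition $x'y'\in F$; each of $A(x,y)$ and $B(x,y)$ is of the form $w(\MM)$ for a combination of $1$-, $2$-, and $3$-uniform test functions counting tuples of matching hyperedges realizing the corresponding local configuration around $(x,y)$. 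Since $w(\HH)/d^j=\Theta(n)$ for both and each is concentrated within a factor $1\pm d^{-\eps^3}$ of this, the count in property~$4$, which equals $A(x,y)-B(x,y)$, is bounded by $O(n\cdot d^{-\eps^3})=O(n^{1-\delta})$.

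The main obstacle is the bookkeeping: designing the test functions for property~$4$ so that the ``bad'' count emerges as the difference of two tracked quantities of order $n$ (so that a relative error of $d^{-\eps^3}$ translates into the required additive bound $4n^{1-\delta}$), and checking that all of conditions (C1)--(C3) for $\CC$ and (W1)--(W4) for each test function hold simultaneously with compatible parameters $\eps<1/5$, constant $\ell$, and $\delta=\Theta(\eps^3)$. Once everything is in place, applying Theorem~\ref{thm:blackbox} produces the matching $\MM$, from which we read off the partial coloring by assigning color $c$ to every edge of $H$ for each $(H,c)\in\MM$; properties 1 and 2 are then immediate from the structure of $\HH$ and $\CC$, and properties 3 and 4 follow from the test-function estimates above.
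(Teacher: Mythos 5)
Your proposal is essentially the paper's proof: the same 18-uniform hypergraph whose vertices are the edges of $K_{n,n}$, the vertex--color copies, and the same-side pairs (with $d\sim n^5/2$), the same observation that the pair-vertices automatically exclude the size-2 and adjacent $2{+}2$ configurations so that only size-3 and size-4 conflicts need to go into $\CC$, per-vertex 1-uniform test functions for property 3, and a difference-of-tracked-counts argument for property 4, which the paper implements with the pairs $\PP_{x,y}$, the triples $\TT_{x,y}$, and the 1-uniform counts built from $\SSS_x$ and $\DD_{x,y}$. One caution about the step you defer to bookkeeping: concentration of $A(x,y)$ and $B(x,y)$ around values that are each merely $\Theta(n)$ does not by itself bound $A-B$; you also need the deterministic main terms $d^{-j}w(\HH)$ of the two families to cancel to within $O(n^{1-\delta})$, which is exactly what the paper engineers via $|\TT_{x,y}|=9(d\pm O(n^4))|\PP_{x,y}|$ and $|\SSS_x|-|\DD_{x,y}|=O(n^5)$, and which does hold here because every vertex of $\HH$ has degree $d(1-O(1/n))$ --- so your plan goes through once the test functions are specified at that level of precision.
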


\begin{proof}
 Let $U=\binom{X\cup Y}{2}$ and $V=\bigcup_{i\in [\frac{n}{2}]}V_i$, where each $V_i$ is a copy of $X\cup Y$. We will denote the copy of $v\in X\cup Y$ in $V_i$ by $v^i$.  Let $\HH$ be the 18-uniform hypergraph with vertex set $U\cup V$ and edges $e_{i}$ for each copy $e$ of $C_6$ in $K_{n,n}$ and $i\in [\frac{n}{2}]$ defined as follows. Given a 6-cycle $e=x_1y_1x_2y_2x_3y_3$ with $x_j\in X$ and $y_j\in Y$, let $\HH$ contain the edge
\[e_{i}=\{x_1y_1,y_1x_2,x_2y_2,y_2x_3,x_3y_3,y_3x_1\}\cup\{x_1x_2,x_2x_3,x_1x_3,y_1y_2,y_2y_3,y_1y_3\}\cup\{x_1^i,x_2^i,x_3^i,y_1^i,y_2^i,y_3^i\}.\] Note that a matching in $\HH$ corresponds to a set of edge-disjoint monochromatic copies of $C_6$ in $K_{n,n}$.  Furthermore, no vertex in the matching appears in two cycles of the same color. Thus, a matching in $\HH$ yields an edge-coloring of a subgraph of $K_{n,n}$ with at most $\frac{n}{2}$ colors which satisfies property (1) above. 

The hypergraph $\HH$ is essentially $d$-regular for $d=\frac{1}{2}n^5$. Indeed, we can check that for each vertex $u$ in $\HH$,  $d-O(n^4)\leq \deg_{\HH}(u)\leq d$. First, suppose $u\in V_i$ for some $i$. There are  $n^3\binom{n}{2}-O(n^4)$ ways to pick an edge in $\HH$ containing $u$, since there are $\binom{n}{2}$ choices for the neighbors of $u$ and $n-O(1)$ choices for each of the remaining vertices along the cycle.  If instead $u=xy$ for some $x\in X$ and $y\in Y$, then there are $\frac 12 n$ ways to pick a color of an edge containing $xy$ and $n^4-O(n^3)$ ways to pick the remaining vertices along the cycle. Similarly, if $u=xx'$ for $x,x'\in X$ (or $yy'$ for $y,y'\in Y$), there are $\frac 12 n^5-O(n^4)$ edges in $\HH$ containing the edge. Thus, in each case, we have $\deg_{\HH}(u)=\frac{1}{2}n^5-O(n^4)$. 

Furthermore, we have $\Delta_2(\HH)\leq d^{1-\eps}$ for all $\eps \in (0,\frac{1}{6})$, since each pair of vertices $u,v\in V(\HH)$ is contained in at most $O(n^4)$ edges in $\HH$.

Next we define a conflict system $\CC$ for $\HH$ with edges of size 3 and 4.  Let $V(\CC)=E(\HH)$, and let the edges of $\CC$ correspond to copies of $C_4$ in $K_{n,n}$ formed by two monochromatic matchings of size 2.  We will call these edges of $\CC$ \emph{conflicts}, and they correspond to sets of three or four monochromatic cycles in $K_{n,n}$.  That is, given four vertices $x,x'\in X$ and $y,y'\in Y$, colors $i,j\in [\frac{n}{2}]$ and edges $e_{i}, e_{i}', f_{j},f_{j}'\in E(\HH)$ (not necessarily distinct) with $xy\in e_{i}$, $x'y'\in e'_{i}$, $xy'\in f_j$, and $x'y\in f'_j$, we have $\{e_{i}, e'_{i}, f_j,f'_j\}\in E(\CC)$. Note that we do not need to consider conflicts of size 2:   it is possible to have two monochromatic copies $e,e'$ of $C_6$ that  form a copy of $C_4$ with alternating colors, however such $e,e'$ would have to share two vertices in $X$ (and two vertices in $Y$), and thus cannot appear together in a matching of $\HH$.

We will show that $\Delta(\CC^{(3)})=O(d^2)$ and $\Delta(\CC^{(4)})=O(d^3)$. To this end, fix $e_{i}\in V(\CC)$. Note that any conflict in $\CC^{(3)}$ containing $e_i$ must either have the form $\{e_{i},e_{i}',f_j\}$ or $\{e_{i},f_j, f'_j\}$ for some copies $e',f,f'$ of $C_6$ in $K_{n,n}$ and some color $j\in[\frac{n}{2}]\backslash\{i\}$. In either case, there are $O(n)$ ways to pick $j$. If the edge has the form $\{e_i,e_i',f_j\}$, then there are $O(n^6)$ ways to pick $e'$ and then $O(n^2)$ ways to pick $f$. If instead the edge has the form $\{e_i,f_j,f_j'\}$, then there are $O(n^4)$ ways to pick $f$ and $O(n^4)$ ways to pick $f'$. Thus, in either case, $d_{\CC}(e_i)=O(n^9)$, and we have $\Delta(\CC^{(3)})=O(d^2)$. Similarly, any conflict in $\CC^{(4)}$ containing $e_i$ must have the form $\{e_i,e'_i,f_j,f'_j\}$ for some copies $e, e',f,f'$ of $C_6$ and some color $j\in[\frac{n}{2}]$, so there are $O(n)$ choices for $j$, $O(n^6)$ choices for $e'$, and $O(n^4)$ choices for each of $f$ and $f'$. Thus, $\Delta(\CC^{(4)})=O(n^{15})=O(d^3)$. 

In addition, we have $\Delta_2(\CC^{(3)})<d^{1-\eps}$ for all $\eps\in(0,\frac{1}{6})$. Indeed, the number of conflicts in $\CC^{(3)}$ containing two fixed vertices $e_i,e_i'\in V(\CC)$ is $O(n^3)$, since there are $O(n)$ ways to pick another color and $O(n^2)$ ways to pick two more vertices for the third edge in the conflict. Similarly, the number of conflicts in $\CC^{(3)}$ containing two fixed vertices $e_i,f_j\in V(\CC)$ is $O(n^4)$, which is the
number of ways to pick four more vertices for the third edge in the conflict. Similarly, we have $\Delta_2(\CC^{(4)})<d^{2-\eps}$ for all $\eps\in(0,\frac{1}{6})$. The number of conflicts in $\CC^{(4)}$ containing two fixed vertices $e_i,e_i'\in V(\CC)$ is $O(n^{9})$ since there are $O(n)$ choices for a second color and $O(n^4)$ choices for each of $f$ and $f'$ to complete the conflict, and the number of conflicts in $\CC^{(4)}$ containing two fixed vertices $e_i,f_j\in V(\CC)$ is $O(n^{9})$ since there are $O(n^5)$ choices for $e'$ and $O(n^4)$ choices for $f'$ to complete the conflict. Finally, $\Delta_3(\CC^{(4)})<d^{1-\eps}$, because given three vertices $e_i,e'_i,f_j\in V(\CC)$, there are $O(n^4)$ choices for $f'$ to complete a conflict. Therefore, $\CC$ is a $(d,O(1),\eps)$-bounded conflict system for $\HH$ for all $\varepsilon\in(0,\frac{1}{6})$. 

Note that we could apply Theorem~\ref{thm:blackbox} at this point to obtain a conflict-free matching $M$ in $\HH$ which would correspond to a coloring of a subgraph $F$ of $K_{n,n}$ satisfying properties (1) and (2) of Theorem~\ref{thm:coloringproperties}. 
In order to obtain a coloring which also satisfies properties (3) and (4)
of Theorem~\ref{thm:coloringproperties}, we now introduce appropriate test functions. 

First, we define a set of 1-uniform test functions as in~\cite{JM}. For each $v\in X\cup Y$, let $S_v\subset U$ be the set of edges incident to $v$ in $K_{n,n}$. Let $w_v:E(\HH)\rightarrow\{0,2\}$ be the weight function which assigns to each edge $e_i\in E(\HH)$ the size of its intersection with $S_v$. 
Then we have  $$w_v(\HH)=\sum_{e_i\in \HH}w_v(e_i)
=
\sum_{f\in S_v}\deg_{\HH}(f)=nd-O(n^{5})\geq d^{1+\eps},$$ 
satisfying condition (W1). In addition, conditions (W2)-(W4) are trivially satisfied since $w_v$ is 1-uniform, so $w_v$ is a $(d,\eps,\CC)$-trackable  test function for all $\eps\in(0,\frac{1}{6})$ and $v\in X\cup Y$. 

Let $\delta<\eps^3\log_n(\frac{1}{2}n^5)$. Note that applying Theorem~\ref{thm:blackbox} with these test functions would yield a $\CC$-free matching $M\subset \HH$ such that for each $v\in X\cup Y$, 
\[w_v(M)>(1-d^{-\eps^3})d^{-1}w_v(\HH)>(1-n^{-\delta})n.\] 
Thus, for each $v\in X\cup Y$, there are at most $n^{1-\delta}$ edges in $K_{n,n}$ incident to $v$ which do not belong to any edge in $M$. Hence, property (3) is satisfied, as the graph $L$ containing the uncolored edges has maximum degree $\Delta(L)\leq n^{1-\delta}$. 

In order to guarantee that property (4) is also satisfied, we will define four more types of test functions.  The first two will help us to show that for each $(x,y) \in X \times Y$, there are at most $n^{1-\delta}$ edges $x'y'\in E(L)$ with $x'\in X \backslash\{x\}$ and $y'\in Y \backslash\{y\}$ such that $xy'$ and $yx'$ are in distinct edges of the same color in the matching $M\subset \HH$. The next two will help us to show that for each $(x,y)\in X\times Y$, there are at most $n^{1-\delta}$ edges in the matching that contain both $x$ and $y$, and hence at most $3n^{1-\delta}$ graph edges 
$x'y'\in E(L)$ with $x'\in X\backslash \{x\}$ and $y'\in Y\backslash \{y\}$ such that $xy$ and $x'y'$ are contained in the same edge of the matching.

For each pair of vertices $(x,y)\in X\times Y$, let
\[\PP_{x,y}=\left\{\{e_i,e'_i\}:i\in\left[\frac{n}{2}\right], V(e)\cap V(e')=\emptyset,
x\in V(e), y\in V(e')\right\}.\]
Note that
\[|\PP_{x,y}|=\frac{n}{2}\cdot \frac{1}{2}n^5\cdot \frac{1}{2}n^5 \pm O(n^{10})=\frac{1}{8}n^{11}\pm O(n^{10})>d^{2+\eps}.\]

For each $(x,y)\in X\times Y$, define $w_{x,y}$ to be the indicator weight function for the pairs in $\PP_{x,y}$. Since each pair in $\PP_{x,y}$ is a matching of size 2 in $\HH$, $w_{x,y}$ is a 2-uniform test function.
We will show that $w_{x,y}$ is $(d,\eps,\CC)$-trackable for all $\eps\in(0,\frac{1}{6})$, but first we will define a set of 3-uniform test functions.

For each pair of vertices $(x,y)\in X\times Y$, let
\[\TT_{x,y}=\left\{\{e_i,e'_i,f_j\}:\{e_i,e'_i\}\in\PP_{x,y}, j\in\left[\frac{n}{2}\right]\backslash\{i\}, |V(f)\cap V(e)\cap Y|=1, |V(f)\cap V(e')\cap X|=1\right\}.\] 
Thus, the triples in $\TT_{x,y}$ are matchings of size 3 in $\HH$. Note that 
\[|\TT_{x,y}|=9(d\pm O(n^4))|\PP_{x,y}|.\]

Similarly to above, define for each $(x,y)\in X\times Y$ the function $w'_{x,y}$ to be the indicator weight function for triples in $\TT_{x,y}$. This is a 3-uniform test function for $\HH$.

Assuming that these test functions $w_{x,y}$ and $w'_{x,y}$ are $(d,\eps,\CC)$-trackable, we can apply Theorem~\ref{thm:blackbox} to obtain a matching $M$ in $\HH$ such that 
\[w_{x,y}(M)=\left|\binom{M}{2}\cap \PP_{x,y}\right|\leq (1+ d^{-\eps^3})d^{-2}|\PP_{x,y}|\leq (1+n^{-2\delta})\frac{n}{2}\]
and 
\[w'_{x,y}(M)=\left|\binom{M}{3}\cap \TT_{x,y}\right|\geq (1-d^{-\eps^3})d^{-3}|\TT_{x,y}|\geq (1-n^{-2\delta})\frac{9n}{2}.\]
Note that each edge $x'y'\in E(L)$ with $xy'$ and $x'y$ appearing in distinct edges of $\HH$ of the same color corresponds to an edge of $\HH$ in a triple of $\TT_{x,y}$ extending some pair from $\PP_{x,y}$. Thus, we can bound the number of such $x'y'$ as follows:
\[9\left|\binom{M}{2}\cap \PP_{x,y}\right|-\left|\binom{M}{3}\cap\TT_{x,y}\right|\leq n^{1-\delta}.\]
Therefore, in order to prove the first case of property (4), we must verify that $w_{x,y}$ and $w'_{x,y}$ are $(d,\eps,\CC)$-trackable for $\eps\in(0,\frac{1}{6})$. For this we must check conditions (W1)-(W4) for both functions. 

Condition (W1) for $w_{x,y}$ is satisfied since $w_{x,y}(\HH)=|\PP_{x,y}|>d^{2+\eps}$. To check condition (W2), fix an edge $e_i\in \HH$. The number of  edges $e'_i$ which form a pair in $\PP_{x,y}$ with $e_i$ is at most $O(n^5)<n^{11}/d^{1+\eps}$. For condition (W3), note that the only pairs of edges $\{e,f\}\in\binom{\HH}{2}$ for which $w_{x,y}(\{e,f\})>0$ are pairs $\{e_i,e'_i\}$ with $x\in e_i$ and $y\in e'_i$. Given such a pair, the number of triples of edges in $\HH$ which can form a conflict with $e_i$ or $e'_i$ is at most $O(n\cdot n^6\cdot n^3\cdot n^3)=O(n^{13})<d^{3-\eps}.$ Finally, condition (W4) is vacuously true, so $w_{x,y}$ is $(d,\eps,\CC)$-trackable.

We now verify that $w'_{x,y}$ is $(d,\eps,\CC)$-trackable for $\eps\in(0,\frac{1}{6})$ as well. Condition (W1) holds because $w'_{x,y}(\HH)=|\TT_{x,y}|=\Theta(n^{16})>d^{3+\eps}$. To see that condition (W2) holds, fix $e_i\in\HH$. There are $O(n^{11})<w'_{x,y}(\HH)/d^{1+\eps}$ triples in $\TT_{x,y}$ containing $e_i$ since we must either pick a second color $j$, six more vertices for $e_i'$, and four more vertices for $f_j$ to form a triple $\{e_i,e_i',f_j\}$ or a second color $j$, five more vertices for $f_j$, and five more vertices for $f_j'$ to form a triple $\{e_i,f_j,f_j'\}$. And, for each pair of edges $\{e_i,e_i'\}$ or $\{e_i,f_j\}$ in $\HH$, there are $O(n^5)<w'_{x,y}(\HH)/d^{2+\eps}$ triples containing this pair. 
For condition (W3), fix a pair of edges $\{e,f\}\in \binom{\HH}{2}$ which are in at least one triple together in $\TT_{x,y}$. Either we have $\{e,f\}=\{e_i,e'_i\}$ or $\{e,f\}=\{e_i,f_j\}$. In the first case, as above, there are $O(n^{13})$ triples of edges in $\HH$ which form a conflict with either $e_i$ or $e'_i$. In the second case, there are at most $O(n^{14})<d^{3-\eps}$  conflicts containing $e_i$ which use only the colors $i$ and $j$, and this bounds the number of triples which form a conflict with either $e_i$ or $f_j$. 
Thus, condition (W3) holds. Finally, condition (W4) is vacuously true.

We now define for each $(x, y) \in X \times Y$ another pair of test functions to guarantee that there are at most $n^{1-\delta}$ edges in the matching $M\in \HH$ which contain both $x$ and $y$. Set \[\SSS_x=\left\{e_i:i\in\left[\frac{n}{2}\right],x\in V(e)\right\}\] and \[\DD_{x,y}=\left\{e_i:i\in\left[\frac{n}{2}\right],x\in V(e),y\notin V(e)\right\}.\] Let $u_x$ and $u'_{x,y}$ be the indicator functions for $\SSS_x$ and $\DD_{x,y}$, respectively. Note that $u_x(\HH)=|\SSS_x|=\frac{1}{2}n\cdot \frac{1}{2}n^5\pm O(n^5)$, and similarly $u'_{x,y}=|\DD_{x,y}|=\frac{1}{4}n^6\pm O(n^5)$. Thus, property (W1) holds for both $u_x$ and $u'_{x,y}$. Furthermore, (W2)-(W4) are vacuously true since these are 1-uniform test functions, hence both are $(d,\eps,\CC)$-trackable for $\eps\in (0,\frac{1}{6})$.

Applying Theorem~\ref{thm:blackbox} with all of these test functions gives a $\CC$-free matching $M\subset \HH$ such that for each $x\in X$, \[u_x(M)\leq (1+d^{-\eps^3})d^{-1}u_x(\HH)\leq (1+n^{-2\delta})\frac{n}{2},\] and for each $y\in Y$, 
\[u'_{x,y}(M)\geq(1-d^{-\eps^3})d^{-1}|\DD_{x,y}|\geq (1-n^{-2\delta})\frac{n}{2}.\]
Therefore, the number of edges in $M$ which contain both $x$ and $y$ is at most $u_x(M)-u'_{x,y}(M)\leq n^{1-\delta}$, as desired. Thus, the coloring obtained from $M$ satisfies property (4). 
\end{proof}

We now proceed with the proof of Theorem~\ref{thm:C4inKnn}. Applying Theorem~\ref{thm:coloringproperties} with $2\delta$ in place of $\delta$, we obtain a partial coloring of $K_{n,n}$ using at most $n/2$ colors for which the uncolored subgraph $L$ of $K_{n,n}$ has maximum degree at most $n^{1-2\delta}$. To color these remaining edges, we randomly color with a new set $P$ of $k=n^{1-\delta}$ colors, with each edge in $L$ receiving a color in $P$ with equal probability $1/k$, independently of the other edges. We will show using the symmetric local lemma that such a coloring exists which creates no 2-colored $C_4$, and hence, $g(K_{n,n},C_4)\leq n/2+n^{1-\delta}$. 

To this end, we define three types of ``bad events," without which every $C_4$ in our coloring will contain some color exactly once. First, for each pair of adjacent edges $e,f\in E(L)$ and each color $i\in P$, let $A_{e,f,i}$ be the event that $e$ and $f$ receive the same color $i$. Note that $\prob[A_{e,f,i}]=k^{-2}$. Next, for each 4-cycle $D$ in $L$, let $B_D$ be the event that $D$ is properly-colored with two colors from $P$. Then $\prob[B_D]\leq 2k^{-2}$. Finally, for each color $i\in P$ and each 4-cycle $D=xyx'y'$ in $K_{n,n}$ with $xy, x'y'\in E(L)$ and $xy', x'y\in E(F)$ where $xy'$ and $x'y$ received the same color in the first coloring, let $C_{D,i}$ be the event that $xy$ and $x'y'$ both receive color $i$. Then $\prob[C_{D,i}]=k^{-2}$.  Let $\EE$ be the collection of all bad events of the types $A_{e,f,i}$, $B_D$, and $C_{D,i}$. 

Fix an event $E\in \EE$. Note that $E$ is mutually independent from the set of all events $E'$ which do not involve any of the same edges as $E$ in $L$. We call such events edge-disjoint from $E$. There are at most $8\Delta(L)k\leq 8k^2n^{-\delta}$ events of the form $A_{e,f,i}$ which are not edge-disjoint from $E$. In addition, there are at most $4(\Delta(L))^2\leq k^2n^{-\delta}$ events $B_D$ and, by property (4) of Theorem~\ref{thm:coloringproperties}, at most $4(4n^{1-2\delta})k\leq 16k^2n^{-\delta}$ events $C_{D,i}$ which are not edge-disjoint from $E$. Thus, in total, $E$ is mutually independent of all but at most $25k^2n^{-\delta}$ events in $\EE$. By the local lemma, since $25k^2n^{-\delta}\cdot 2k^{-2}\leq 1/4$, there is a coloring of $L$ which contains none of the events in $\EE$, as desired. 
\qed

\bibliography{main}
\bibliographystyle{plain}

\end{document}